\documentclass[letterpaper,12pt]{article}

\usepackage{amsmath}
\allowdisplaybreaks

\usepackage{microtype,booktabs}
\usepackage{amsmath,amssymb,amsfonts,amsthm,bm}
\usepackage{authblk}
\usepackage{mathrsfs,dsfont}
\usepackage{indentfirst}
\usepackage[pdftex]{color,graphicx}
\usepackage[pdftex,bookmarks,unicode,colorlinks,linkcolor=blue]{hyperref}
\usepackage{tikz}
\usepackage{makecell}
\usepackage{subfigure}
\usetikzlibrary{arrows, decorations.pathmorphing, backgrounds, positioning, fit, petri, automata,cd}
\usepackage{rotating}
\usepackage{orcidlink}
\numberwithin{equation}{section}

\title{\Large Onsager--Machlup functionals for McKean--Vlasov SDEs via Euler-type approximation}

\date{}
\author[$\dag$]{\small Ying Chao $^\ast$}
\author[$\ddag$]{\small Pingyuan Wei \thanks{Corresponding authors\par \textit{\;\;Email:} yingchao1993@xjtu.edu.cn (Ying Chao),  pwei@seu.edu.cn (Pingyuan Wei)}}
\affil[$\dag$]{\footnotesize School of Mathematics and Statistics, Xi'an Jiaotong University, Xi'an, Shaanxi 710049, China.}
\affil[$\ddag$]{\footnotesize School of Mathematics, Southeast University, Nanjing 211189, China.}

\newtheorem{assumption}{Assumption}[section]
\newtheorem{theorem}{Theorem}[section]

\newtheorem{lemma}[theorem]{Lemma}
\newtheorem{pro}[theorem]{Proposition}

\theoremstyle{remark}
\newtheorem{remark}[theorem]{Remark}
\theoremstyle{definition}
\newtheorem{definition}[theorem]{Definition}

\begin{document}
\maketitle

\begin{abstract}
The Onsager--Machlup action functional provides a variational framework for characterizing the most probable transition paths of stochastic systems and plays an important role in the study of nonequilibrium fluctuations. Its extension to McKean--Vlasov stochastic differential equations is complicated by the intrinsic distribution dependence of the coefficients. In this paper, we address this difficulty by introducing an Euler-type approximation scheme based on classical, distribution-free stochastic differential equations. Combining the classical Onsager--Machlup theory with a convergence argument for the approximation sequence, we derive an explicit expression for the Onsager--Machlup functional associated with the McKean--Vlasov SDE. The proposed approach is constructive and extends to a broad class of distribution-dependent stochastic systems.\\

\noindent\textit{Keywords:} Onsager--Machlup functional; McKean–Vlasov stochastic differential equations; interpolated Euler-like sequence; most probable transition paths.

\end{abstract}


\renewcommand{\theequation}{\thesection.\arabic{equation}}
\setcounter{equation}{0}

\section{Introduction}

In stochastic dynamical systems, transitions between distinct metastable states (commonly known as metastable transitions) often emerge from the interplay between nonlinear dynamics and random perturbations. Such phenomena are observed in a wide range of contexts, including climate change (e.g., \cite{Ditlevsen1999observation}), nucleation in phase transitions (e.g., \cite{Heymann2008Pathways}), conformational switching of macromolecules (e.g., \cite{Weinan2002string}), and gene transcription (e.g., \cite{Zheng2016transitions}). Beyond these domain-specific settings, they lead to a common mathematical problem: given two metastable states, which trajectory connecting them is the most probable? A powerful framework for addressing this question is provided by the Onsager–Machlup (OM) action functional, which quantifies the likelihood of such transitions and identifies the most probable transition pathway (MPTP) as its minimizer \cite{Brocker2019correct}.

The OM action functional originated in the seminal work of Onsager and Machlup \cite{Onsager1953} on irreversible thermodynamic fluctuations, where transition probabilities for Gaussian systems were characterized through a path-integral formalism. Their original theory was restricted to diffusion processes with linear drift and constant diffusion coefficients. It was subsequently extended to nonlinear stochastic systems by Tisza and Manning \cite{tisza1957fluctuations}. Later, rigorous mathematical foundations were established in \cite{Durr1978,Ikeda2014stochastic}, where the OM functional was derived for reversible diffusions and general It\^{o} stochastic differential equations (SDEs), respectively, by analyzing the asymptotic probability that sample paths remain within shrinking tubes around smooth reference curves. In this framework, the OM functional arises as the corresponding limiting action. More recently, the theory has been generalized to non-Gaussian systems. In particular, Bardina et al. \cite{Bardina2002asymptotic} studied tube probabilities for linear SDEs driven by Poisson noise, while subsequent works \cite{Chao2019,Chao2025,Huang2025} extended the framework to broader classes of L\'evy-type models.

Two classical approaches are commonly used to derive the OM functional: the formal path-integral approach \cite{Hunt1981} and the measure-change approach based on the Girsanov theorem \cite{Durr1978,Ikeda2014stochastic}. The former begins with a time discretization of the underlying dynamics, constructs finite-dimensional transition densities, and then passes to the continuum limit. Formally, this leads to a path-integral representation of the transition density from $x_0$ to $x_T$ as a weighted superposition over all possible paths,
\begin{align*}
p_T(x_0, x_T) = &\lim_{n \to \infty} \int_{\mathbb{R}^d} \cdots \int_{\mathbb{R}^d} p_{t_1 - t_0}(x_0, x_1) \cdots p_{t_n - t_{n-1}}(x_{n-1}, x_T) \, dx_1 \cdots dx_{n-1}\\
=& \int_{\phi \in C_{x_0, x_T}[0,T]} \exp\!\bigl( - \mathcal{L}(\phi, \dot{\phi}) \bigr) \, \mathcal{D}(\phi),
\end{align*}
where $C_{x_0,x_T}[0,T]$ denotes the space of continuous paths with fixed endpoints $x_0$ and $x_T$, $\mathcal D$ is the corresponding path measure, and $\mathcal L(\phi,\dot\phi)$ is the OM functional assigning a weight to each path. The second approach characterizes the probability that the process $X_t$ remains within a tube of radius $\varepsilon$ around a smooth reference path $\phi$, and then examines the limit as $\varepsilon \to 0$. More precisely,
$$
\mathds{P}\bigl( \sup_{t \in [0,T]} |X_t - \phi_t| \leq \varepsilon \bigr) \approx C(\varepsilon, T) \exp\!\bigl( - \mathcal{L}(\phi, \dot{\phi}) \bigr),
$$
where $X_t$ is the solution of a diffusion or jump-diffusion SDE, and $C(\varepsilon,T)$ is a positive prefactor depending on $\varepsilon$ and $T$. To compute the probability on the left-hand side, a change of measure via the Girsanov theorem is applied. The central task is then to identify the correct leading-order exponent in the limit as $\varepsilon \to 0$.

The situation becomes significantly more delicate for McKean--Vlasov SDEs. These equations, first introduced by McKean \cite{Mckean1966} in connection with the kinetic theory of Kac \cite{Kac1956}, are characterized by coefficients depending not only on the current state but also on the probability distribution of the solution. They are also referred to as mean-field SDEs, since they arise as the limiting dynamics of weakly interacting particle systems under the propagation of chaos \cite{Sznitman1992}. Owing to their broad relevance in stochastic control, mean-field games, and statistical physics, McKean--Vlasov SDEs have been extensively studied in recent years, including well-posedness \cite{Li2022strong,MR5056525}, ergodicity \cite{MR4291453}, and random attractors \cite{shi2024invariant,cheng2025random}. In particular, Ren and Wang \cite{ren2020space} studied the path-independence of additive functionals for McKean--Vlasov SDEs and established an It\^{o} formula for distribution-dependent functionals involving the $L$-derivative introduced by Lions \cite{cardaliaguet2013notes}.

Despite this progress, the OM theory for McKean--Vlasov SDEs remains far from complete. This is due to several fundamental difficulties caused by the distribution dependence of the coefficients, both in the path-integral approach and in the Girsanov-transform approach. On the one hand, in the path-integral framework, the transition density of a typical McKean--Vlasov SDE is highly nontrivial and does not admit a tractable path representation analogous to the classical diffusion case. On the other hand, in the Girsanov-transform approach, one must identify the correct leading-order exponential term in the vanishing-tube limit as $\varepsilon \to 0$, which becomes technically delicate once the Radon--Nikodym derivative is reformulated via It\^{o}'s formula. Liu et al. \cite{liu2023onsager} investigated this second approach for McKean--Vlasov SDEs driven by additive Brownian noise, under additional assumptions on the drift coefficient, which restricts the scope of their result. To the best of our knowledge, only a few works have addressed this problem \cite{liu2023onsager2}, and most of them are limited to additive Brownian noise or to special forms of the drift. In the present work, we aim to fill this gap by rigorously deriving an Onsager--Machlup functional for a broad class of McKean--Vlasov stochastic systems.

Interacting stochastic particle systems provide a natural approximation to the distribution dependence in McKean--Vlasov SDEs and are widely used in numerical simulations (e.g., \cite{Li2022strong}). However, these systems are inherently $N$-dimensional. Although each finite particle system consists of classical SDEs, the exponent in the corresponding OM functional is typically a sum of $N$ coupled terms, and it is unclear whether this representation admits a closed-form limit as $N \to \infty$. Instead of using particle systems as an intermediate approximation, we adopt an Euler-type scheme composed of classical, distribution-free SDEs. This strategy is inspired by our previous work \cite{MR5056525}, where a similar approximation scheme was used to establish well-posedness under merely locally Lipschitz conditions in the state variable, allowing for superlinear drift growth; related ideas also appear in \cite{Kloeden2010,Li2022strong}. Each approximating system admits an explicit OM functional in closed form. By establishing the convergence of the approximating solutions to the McKean--Vlasov equation, we rigorously derive the OM functional for the original system and formulate the associated variational problem. The proposed framework applies to a broad class of McKean--Vlasov SDEs driven by Brownian noise and can be naturally extended to L\'evy-driven systems.

The remainder of this paper is organized as follows. Section~\ref{sec:2} introduces the notation, assumptions, and basic definitions, and constructs the interpolated Euler-like approximation sequence. Section~\ref{sec:3} develops the general theory. In particular, Section~\ref{sec:31} derives the OM functional for a class of McKean--Vlasov SDEs with multiplicative Brownian noise, while Section~\ref{sec:32} presents a Lagrangian formulation for analyzing the most probable transition pathways between arbitrary states. The proof of the main theorem, based on the interpolated Euler-like approximation, is provided in Section~\ref{sec:33}. Section~\ref{sec:4} illustrates the theory through a concrete example, and Section~\ref{sec:5} concludes the paper with a brief summary and discussion.

\section{Preliminaries}\label{sec:2}

\subsection{Basic notation and framework}

We begin by introducing some notations used throughout the paper. Let $|\cdot|$ and $\langle \cdot, \cdot \rangle$ denote the Euclidean norm and the inner product on $\mathbb{R}^d$, respectively. For a matrix $A$, we use the Frobenius norm $\|A\| := \sqrt{\operatorname{tr}(AA^\top)}$. Denote by $\mathcal{P}_2(\mathbb{R}^d)$ the space of probability measures on $\mathbb{R}^d$ with finite second moment, i.e.,
\[
\mathcal{P}_2(\mathbb{R}^d) := \left\{ \mu : \mu \text{ is a probability measure on } \mathbb{R}^d \text{ and } \mu(|\cdot|^2) := \int_{\mathbb{R}^d} |x|^2 \, \mu(dx) < \infty \right\}.
\]
Note that for any $x \in \mathbb{R}^d$, the Dirac delta measure $\delta_x$ belongs to $\mathcal{P}_2(\mathbb{R}^d)$. Moreover, if $\phi_t$ is a deterministic path, then the law of $\phi_t$ is the Dirac measure at $\phi_t$, i.e., $\mathscr{L}_{\phi_t} = \delta_{\phi_t}$. The space $\mathcal{P}_2(\mathbb{R}^d)$ is a Polish space equipped with the $L^2$-Wasserstein distance
\[
W_2(\mu_1, \mu_2) := \inf_{\pi \in \mathscr{C}(\mu_1, \mu_2)} \left( \int_{\mathbb{R}^d \times \mathbb{R}^d} |x-y|^2 \, \pi(dx, dy) \right)^{1/2}, \;\; \mu_1, \mu_2 \in \mathcal{P}_2(\mathbb{R}^d),
\]
where $\mathscr{C}(\mu_1, \mu_2)$ denotes the set of all probability measures on $\mathbb{R}^d \times \mathbb{R}^d$ with marginals $\mu_1$ and $\mu_2$, respectively. In particular, if $\mu_1 = \mathscr{L}_X$ and $\mu_2 = \mathscr{L}_Y$ are the laws of random variables $X$ and $Y$, then
\[
\bigl( W_2(\mu_1, \mu_2) \bigr)^2 \leq \int_{\mathbb{R}^d \times \mathbb{R}^d} |x-y|^2 \, \mathscr{L}_{(X,Y)}(dx, dy) = \mathbb{E}|X-Y|^2,
\]
where $\mathscr{L}_{(X,Y)}$ denotes the joint distribution of $(X,Y)$. 

Given $T > 0$, let $C([0,T]; \mathbb{R}^d)$ denote the space of continuous functions from $[0,T]$ to $\mathbb{R}^d$ endowed with the supremum norm, and denote by $C^{x_0}([0,T]; \mathbb{R}^d)$ the subspace of $C([0,T]; \mathbb{R}^d)$ consisting of functions satisfying $\phi(0) = x_0$. For $1 \leq p < \infty$, we write $L^p(\Omega; \mathbb{R}^d)$ for the space of $\mathbb{R}^d$-valued random variables $X$ such that $\mathbb{E}|X|^p < \infty$. Similarly, let $L^p(\Omega; C([0,T]; \mathbb{R}^d))$ denote the space of $C([0,T]; \mathbb{R}^d)$-valued random variables $X$ satisfying $\mathbb{E}\bigl[\sup_{0 \le t \le T} |X(t)|^p\bigr] < \infty$. Equipped with the norm
\[
\|X\|_{L^p} := \left( \mathbb{E}\left[ \sup_{0 \le t \le T} |X(t)|^p \right] \right)^{1/p},
\]
$L^p(\Omega; C([0,T]; \mathbb{R}^d))$ is a Banach space.

For readability, we also recall the notion of the $L$-derivative, also known as the Lions derivative, i.e., a derivative with respect to probability measures; for more details, see \cite{cardaliaguet2013notes,ren2020space}. Let $\mathrm{Id}: \mathbb{R}^d \to \mathbb{R}^d$ denote the identity map, $\mathrm{Id}(x)=x$ for $x\in\mathbb{R}^d$. Note that for any $\mu \in \mathcal{P}_2(\mathbb{R}^d)$ and $\psi \in L_\mu^2(\mathbb{R}^d; \mathbb{R}^d)$, we have $\mu \circ (\mathrm{Id} + \psi)^{-1} \in \mathcal{P}_2(\mathbb{R}^d)$, where $L_\mu^2(\mathbb{R}^d; \mathbb{R}^d)$ denotes the space of square-integrable vector-valued functions from $\mathbb{R}^d$ to $\mathbb{R}^d$ with respect to $\mu$.

\begin{definition}[$L$-derivative]\label{L-def}
A function $h:\mathcal{P}_2(\mathbb{R}^d)\to \mathbb{R}$ is said to be \emph{$L$-differentiable at $\mu\in \mathcal{P}_2(\mathbb{R}^d)$} if the map
$$
L_\mu^2(\mathbb{R}^d; \mathbb{R}^d)\ni \psi
\longmapsto
h\bigl(\mu\circ(\mathrm{Id}+\psi)^{-1}\bigr)
$$
is Fr\'echet differentiable at $\psi=0$; i.e., there exists a unique $\xi\in L_\mu^2(\mathbb{R}^d; \mathbb{R}^d)$ such that
$$
\lim_{\mu(|\psi|^2)\to 0}
\frac{
h\bigl(\mu\circ(\mathrm{Id}+\psi)^{-1}\bigr)-h(\mu)-\mu(\langle \xi,\psi\rangle)
}{
\sqrt{\mu(|\psi|^2)}
}
=0.
$$
In this case, we write
$$
D_\mu h(\mu)=\xi,
$$
and call $D_\mu h(\mu)$ the \emph{$L$-derivative} of $h$ at $\mu$. The function $h$ is said to be \emph{$L$-differentiable on $\mathcal{P}_2(\mathbb{R}^d)$} if $D_\mu h(\mu)$ exists for every $\mu\in \mathcal{P}_2(\mathbb{R}^d)$.
\end{definition}


With these notations in place, we now consider the following $d$-dimensional McKean--Vlasov SDE on the interval $[0,T]$:
\begin{equation}\label{Main-equation}
dX_t = b\bigl(t, X_t, \mu_t\bigr) \, dt + \sigma\bigl(t, X_t, \mu_t\bigr) \, dW_t, \qquad X(0) = x_0,
\end{equation}
where $\mu_t$ denotes the law of $X_t$ at time $t$, and $\{W_t\}_{t \geq 0}$ is an $m$-dimensional standard Wiener process defined on a complete filtered probability space with filtration $(\mathscr{F}_t)_{t \geq 0}$ satisfying the usual conditions. The coefficients
 $b: [0,T] \times \mathbb{R}^d \times \mathcal{P}_2(\mathbb{R}^d) \to \mathbb{R}^d$, $\sigma: [0,T] \times \mathbb{R}^d \times \mathcal{P}_2(\mathbb{R}^d) \to \mathbb{R}^{d \times m}$
are Borel measurable. 

Throughout the paper, we impose the following assumptions on \eqref{Main-equation}.

\begin{assumption}\label{assump:main}
There exists a constant $\kappa \geq 2$ such that the following conditions hold.
\begin{enumerate}
\item {(One-sided locally Lipschitz condition on the state variable)} 
For every $R > 0$, there exists a positive constant $L_R$ such that for any $t \in [0,T]$, $x, y \in \mathbb{R}^d$ with $|x| \vee |y| \leq R$, and $\mu \in \mathcal{P}_2(\mathbb{R}^d)$,
\begin{equation*}
\langle x - y, b(t,x,\mu) - b(t,y,\mu) \rangle \vee \|\sigma(t,x,\mu) - \sigma(t,y,\mu)\|^2  \leq L_R |x-y|^2,\notag
\end{equation*}
where the symbol ``$\vee$'' denotes the maximum of the multiple terms.

\item {(Globally Lipschitz condition on the measure)} 
There exists a positive constant $L$ such that for any $t \in [0,T]$, $x \in \mathbb{R}^d$, and $\mu_1, \mu_2 \in \mathcal{P}_2(\mathbb{R}^d)$,\notag
\begin{equation*}
|b(t,x,\mu_1) - b(t,x,\mu_2)|^2 + \|\sigma(t,x,\mu_1) - \sigma(t,x,\mu_2)\|^2  \leq L \, W_2^2(\mu_1, \mu_2).\notag
\end{equation*}

\item {(Continuity)} 
For each $t \in [0,T]$, the maps 
$b(t,\cdot,\cdot)$ and $\sigma(t,\cdot,\cdot)$ are continuous on $\mathbb{R}^d \times \mathcal{P}_2(\mathbb{R}^d)$.\notag

\item {(One-sided linear and global linear growth condition)} 
There exists a positive constant $K$ such that for any $t \in [0,T]$, $x \in \mathbb{R}^d$, and $\mu \in \mathcal{P}_2(\mathbb{R}^d)$,
\begin{equation*}
\langle x, b(t,x,\mu) \rangle \vee \|\sigma(t,x,\mu)\|^2 \leq K \bigl(1 + |x|^2 + W_2^2(\mu, \delta_0)\bigr).\notag
\end{equation*}

\item {($\kappa$-order growth condition on the drift coefficient)} 
There exists a positive constant $K_1$ such that for any $t \in [0,T]$, $x \in \mathbb{R}^d$, and $\mu \in \mathcal{P}_2(\mathbb{R}^d)$,
\begin{equation*}
|b(t,x,\mu)|^2 \leq K_1 \bigl(1 + |x|^\kappa + W_2^\kappa(\mu, \delta_0)\bigr).\notag
\end{equation*}
\end{enumerate}
\end{assumption}
As discussed in the Introduction, the distribution dependence of the coefficients creates substantial difficulties in the analysis of McKean--Vlasov SDEs. To address this issue, we introduce an interpolated Euler-type approximation scheme built from classical SDEs with frozen laws. This approximation will be used to establish convergence of the approximating sequence to the solution of \eqref{Main-equation}, and ultimately to derive the Onsager--Machlup functional.

\subsection{Interpolated Euler-like sequence and convergence result}\label{subsec:euler-sequence}

Let us construct the following Euler-like sequence for the McKean--Vlasov SDE \eqref{Main-equation}. Given $T > 0$, for $n \geq 1$ sufficiently large such that $h_n := T/n \in (0, 1]$, define $t_k^n := k h_n$ for $k = 0, 1, \dots, n$, and set
\(
\kappa_n(t) := \sup\bigl\{ s \in \{0, h_n, 2h_n, \dots, n h_n\} : s \leq t \bigr\}\) for all $t\in[0,T]$. We now construct $X^{(n)}_t$ step by step on the intervals $[0, t_1^n]$, $(t_1^n, t_2^n]$, $\dots$, $(t_{n-1}^n, T]$.

Let $X^{(n)}(0) = x_0$ and for $t \in [0, t_1^n]$, consider the following SDE
\begin{equation}\label{eq:first-step}
dX^{(n)}_t = b\bigl(t, X^{(n)}_t, \mu_0^{(n)}\bigr) \, dt + \sigma\bigl(t, X^{(n)}_t, \mu_0^{(n)}\bigr) \, dW_t,
\end{equation}
where $\mu_0^{(n)} = \mathcal{L}_{X^{(n)}_0}$ denotes the law of $X^{(n)}_0$. This is a classical SDE independent of the law of $X^{(n)}_t$. Under Assumption \ref{assump:main} (1) and (4), for any $t \in [0, t_1^n]$ and $x, y \in \mathbb{R}^d$ with $|x| \vee |y| \leq R$,
\begin{align}
&\bigl\langle x - y, b(t,x,\mu_0^{(n)}) - b(t,y,\mu_0^{(n)}) \bigr\rangle + \bigl\| \sigma(t,x,\mu_0^{(n)}) - \sigma(t,y,\mu_0^{(n)}) \bigr\|^2 \leq L_R |x-y|^2, \notag\\
&\bigl\langle x, b(t,x,\mu_0^{(n)}) \bigr\rangle + \bigl\| \sigma(t,x,\mu_0^{(n)}) \bigr\|^2 \leq 2K \bigl(1 + \mathbb{E}|X^{(n)}_0|^2\bigr) \bigl(1 + |x|^2\bigr).\notag
\end{align}
These conditions imply that \eqref{eq:first-step} admits a unique solution on $[0, t_1^n]$; see, e.g., 
Theorem 3.1.1, p.44 of \cite{Prevot2007}. Moreover, by Assumption \ref{assump:main} (5), for any $p>0$, there exists a positive constant $C$ such that
\begin{equation}
\mathbb{E}\left[ \sup_{0 \leq t \leq t_1^n} |X^{(n)}_t|^p \right] \leq C \bigl(1 + \mathbb{E}|X^{(n)}_0|^p\bigr),\notag
\end{equation}
the proof of which is similar to that of Lemma 1 in \cite{MR5056525}. Consequently, we can define $X^{(n)}_{t_1^n}$ (which satisfies $\mathbb{E}|X^{(n)}_{t_1^n}|^p < \infty$) and $\mu_{t_1^n}^{(n)} := \mathcal{L}_{X^{(n)}_{t_1^n}}$.

For $k = 1$ and $t \in (t_1^n, t_2^n]$, we replace $(X^{(n)}_0, \mu_0^{(n)})$ with $(X^{(n)}_{t_1^n}, \mu_{t_1^n}^{(n)})$ and obtain analogous results. Repeating this procedure yields that for any $\beta > 0$ and $x_0 \in L^{\beta}(\Omega; \mathbb{R}^d)$, the approximating SDE
\begin{equation}\label{eq:euler-approx}
dX^{(n)}_t = b\bigl(t, X^{(n)}_t, \mu_{\kappa_n(t)}^{(n)}\bigr) \, dt + \sigma\bigl(t, X^{(n)}_t, \mu_{\kappa_n(t)}^{(n)}\bigr) \, dW_t
\end{equation}
admits a global solution on $[0,T]$, where $\mu_{\kappa_n(t)}^{(n)}$ denotes the law of $X_{\kappa_n(t)}^{(n)}$.

With the Euler-like sequence constructed above, we have the following convergence result, which establishes the well-posedness of \eqref{Main-equation}.

\begin{pro}[Approximation convergence and well-posedness] \label{thm:convergence}
Let Assumption~\ref{assump:main} hold, and suppose further that $x_0 \in L^{\beta}(\Omega; \mathbb{R}^d)$ for any $\beta > 0$. Then the sequence $\{X^{(n)}\}_{n \geq 1}$ of solutions to \eqref{eq:euler-approx} converges in $L^2(\Omega; C([0,T]; \mathbb{R}^d))$ to a limit $X$, which is the unique strong solution of the McKean--Vlasov equation \eqref{Main-equation}.
\end{pro}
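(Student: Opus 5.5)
We will show that $\{X^{(n)}\}$ is Cauchy in $L^2(\Omega; C([0,T];\mathbb{R}^d))$. Because the coefficients are only one-sided locally Lipschitz in the state, we control differences with a stopping-time localization in the style of \cite{MR5056525}. The argument has four parts.

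\textbf{Step 1: uniform moment bounds.} We first show that for every $p\ge 2$ there is a constant $C_p$, independent of $n$, such that
\[
\sup_n \mathbb{E}\Bigl[\sup_{0\le t\le T}|X^{(n)}_t|^p\Bigr]\le C_p .
\]
Apply It\^o's formula to $|X^{(n)}_t|^p$ and use Assumption~\ref{assump:main}(4). Since $W_2^2(\mu^{(n)}_{\kappa_n(t)},\delta_0)=\mathbb{E}|X^{(n)}_{\kappa_n(t)}|^2$, this moment is bounded by $\mathbb{E}\sup_{s\le t}|X^{(n)}_s|^2$. The Burkholder--Davis--Gundy inequality and Gronwall's lemma then give the bound. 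From these bounds and Assumption~\ref{assump:main}(5) (polynomial growth of $b$) and (4) (linear growth of $\sigma$), we obtain the time-regularity estimate
\[
\mathbb{E}\,|X^{(n)}_t-X^{(n)}_{\kappa_n(t)}|^2\le C h_n .
\]
Consequently $\sup_t W_2^2(\mu^{(n)}_{\kappa_n(t)},\mu^{(n)}_t)\le Ch_n$.

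\textbf{Step 2: localized Cauchy estimate (the main obstacle).} Fix $R>0$ and set
\[
\tau_R^{n,m}:=\inf\{t:\ |X^{(n)}_t|\vee|X^{(m)}_t|\ge R\}\wedge T .
\]
Let $e_t=X^{(n)}_t-X^{(m)}_t$. Apply It\^o's formula to $|e_{t\wedge\tau}|^2$ and split each coefficient difference into a state part and a measure part; for the drift,
\[
b(t,X^{(n)},\mu^{(n)}_{\kappa_n})-b(t,X^{(m)},\mu^{(m)}_{\kappa_m})
=\bigl[b(t,X^{(n)},\mu^{(n)}_{\kappa_n})-b(t,X^{(m)},\mu^{(n)}_{\kappa_n})\bigr]
+\bigl[b(t,X^{(m)},\mu^{(n)}_{\kappa_n})-b(t,X^{(m)},\mu^{(m)}_{\kappa_m})\bigr].
\]
The state part is handled by Assumption~\ref{assump:main}(1) with constant $L_R$. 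The measure part is handled by Assumption~\ref{assump:main}(2) together with
\[
W_2(\mu^{(n)}_{\kappa_n(t)},\mu^{(m)}_{\kappa_m(t)})\le C\sqrt{h_n}+C\sqrt{h_m}+\bigl(\mathbb{E}|e_t|^2\bigr)^{1/2}.
\]
The delicate point is that $\mathbb{E}|e_t|^2$ is \emph{not} stopped at $\tau$, since the measure arguments are deterministic. We therefore write
\[
\mathbb{E}|e_t|^2\le \mathbb{E}\sup_{s\le t}|e_{s\wedge\tau}|^2+\mathbb{E}\bigl[|e_t|^2\mathbf 1_{\{\tau<T\}}\bigr].
\]
The second term is at most $C_4^{1/2}\,\mathbb{P}(\tau<T)^{1/2}\le C/R^{2}$, by Hölder's inequality, Markov's inequality and Step 1. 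Applying BDG and Gronwall then yields
\[
\mathbb{E}\sup_{t\le T}|e_{t\wedge\tau}|^2\le C e^{C L_R T}\bigl(h_n+h_m+R^{-2}\bigr),
\]
and hence
\[
\mathbb{E}\sup_{t\le T}|e_t|^2\le C e^{CL_R T}\bigl(h_n+h_m+R^{-2}\bigr)+C R^{-1}.
\]
The $R^{-1}$ term comes from the part of $\sup_t|e_t|$ off the stopped interval, again controlled by Cauchy--Schwarz and Markov. The exponential factor $e^{CL_RT}$ multiplies $R^{-2}$, so these estimates do not close as written. The fix is to treat the tail term with higher moments: bound $\mathbb{P}(\tau<T)\le C_p R^{-p}$ with $p$ large, and absorb $\mathbb{E}|e_t|^2$ over the set $\{t<\tau\}$ into the Gronwall quantity. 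If the factor $e^{CL_RT}$ still multiplies a tail term, we will instead choose $R=R(n,m)\to\infty$ slowly, and Step~2's tail term needs care: without a growth restriction on $L_R$, a tail bound $R^{-p}$ does not beat $e^{CL_RT}$ for a fixed $p$, so this balancing requires either a quantitative bound on $L_R$ or a compactness/tightness argument instead.

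\textbf{Step 3: passage to the limit.} First let $n,m\to\infty$, then $R\to\infty$. This gives the Cauchy property, and completeness of $L^2(\Omega;C([0,T];\mathbb{R}^d))$ provides the limit $X$. We then pass to the limit in the integral form of \eqref{eq:euler-approx}. Since
\[
W_2(\mu^{(n)}_{\kappa_n(t)},\mathscr{L}_{X_t})\to 0
\]
by Step 1 and $L^2$ convergence, the continuity assumption~(3), uniform integrability from Step 1, and the It\^o isometry identify $X$ as a strong solution of \eqref{Main-equation}.

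\textbf{Step 4: uniqueness.} Given two solutions $X$ and $Y$, we repeat the stopped estimate of Step 2 without discretization errors. Note that $W_2(\mathscr{L}_{X_t},\mathscr{L}_{Y_t})^2\le \mathbb{E}|X_t-Y_t|^2$. Gronwall's lemma gives $\mathbb{E}|X_{t\wedge\tau_R}-Y_{t\wedge\tau_R}|^2$ controlled by tail terms, and letting $R\to\infty$ gives $X=Y$.
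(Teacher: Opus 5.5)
There is a genuine gap, and it is the step that carries the proposition. As you acknowledge, the Cauchy estimate of Step~2 is never closed, yet Step~3 proceeds as if it were. The frozen laws enter only through the deterministic quantity $W_2(\mu^{(n)}_{\kappa_n(t)},\mu^{(m)}_{\kappa_m(t)})$. You can bound this only by the \emph{unstopped} error $v(t)=\mathbb{E}|X^{(n)}_t-X^{(m)}_t|^2$. Splitting $v$ into a stopped part plus a tail,
\[
\rho(R):=\sup_{n,m}\sup_{t\le T}\mathbb{E}\bigl[|X^{(n)}_t-X^{(m)}_t|^2\mathbf{1}_{\{\tau^{n,m}_R<t\}}\bigr],
\]
puts $\rho(R)$ inside a Gronwall loop whose rate is of order $L_R$. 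After $n,m\to\infty$, all you obtain is $\limsup_{n,m}v(T)\le Ce^{CL_RT}\rho(R)$. Equivalently, you get an integral inequality $\bar v(t)\le\omega\bigl(\int_0^t\bar v(s)\,ds\bigr)$ with $\omega(x)=\inf_R\bigl(Ce^{CL_RT}x+\rho(R)\bigr)$. This forces $\bar v\equiv 0$ only under an Osgood-type condition $\int_{0+}dx/\omega(x)=\infty$.

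Assumption~\ref{assump:main}(1) puts no growth restriction on $L_R$, while Step~1 gives only tail bounds $\rho(R)\le C_pR^{-p}$ with $p$-dependent constants. For example, take $d=1$, $b(x,\mu)=x\sin(e^{x^2})+\int y\,\mu(dy)$ and $\sigma\equiv1$. This satisfies (1)--(5) with $L_R$ of order $R^2e^{R^2}$, whereas tails are at best Gaussian. So neither ``first $n,m\to\infty$, then $R\to\infty$'' nor a choice $R=R(n,m)$ gives the Cauchy property.

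Step~4 has the identical defect. For two solutions, $W_2^2(\mathscr{L}_{X_t},\mathscr{L}_{Y_t})\le\mathbb{E}|X_t-Y_t|^2$ is again unstopped, so Gronwall gives $\mathbb{E}|X_t-Y_t|^2\le Ce^{CL_RT}\rho(R)$ rather than $0$. Letting $R\to\infty$ therefore proves nothing.

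Your compactness fallback does not rescue the statement either. Tightness of $\{\mu^{(n)}\}$ in $C([0,T];\mathcal{P}_2(\mathbb{R}^d))$, plus continuity of the input-to-solution map of the frozen-law SDE, gives existence of a solution along subsequences. (The stopped estimate does close there, because the measure discrepancy is a fixed deterministic input rather than the unknown error.) But convergence of the whole sequence in $L^2(\Omega;C([0,T];\mathbb{R}^d))$ and uniqueness both depend on exactly the missing estimate.

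The paper does not prove any of this itself. It imports from Theorem~3.1 of \cite{MR5056525} both the Cauchy property of the Euler-like sequence in $L^{\kappa}(\Omega;C([0,T];\mathbb{R}^d))$ and the identification and uniqueness of the limit. Your Step~1 and the limit identification in Step~3 are sound. Without that result, or an extra hypothesis linking the growth of $L_R$ to the tail decay, your argument does not establish the proposition.
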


Proposition~\ref{thm:convergence} is a special case of Theorem 3.1 in our recent work \cite{MR5056525}, where the jump measure is set to zero. In that work, we proved that the Euler-like sequence is Cauchy in the complete space $L^{\kappa}(\Omega; C([0,T]; \mathbb{R}^d))$ for $\kappa \geq 2$, and the limiting process $X$ is identified as the unique strong solution of \eqref{Main-equation}. The details are therefore omitted here for brevity.

\subsection{Onsager--Machlup functional}

For the McKean--Vlasov SDE \eqref{Main-equation}, we adopt the Onsager--Machlup viewpoint and introduce the following definition.

\begin{definition}\label{def:om}
(OM function $\&$ OM action functional)  Let $\varepsilon > 0$ be given and consider a tube surrounding a reference path $\phi \in C^{x_0}([0,T]; \mathbb{R}^d)$. If, for sufficiently small $\varepsilon$, the probability that the solution process $X$ of \eqref{Main-equation} lies within this tube admits the asymptotic estimate
$$
\mathds{P}\bigl(\{\|X - \phi\| \leq \varepsilon\}\bigr) \propto C(\varepsilon, T) \exp\left\{ -\int_0^T \mathcal{OM}(\phi, \dot{\phi}) \, dt \right\},
$$
then the integrand $\mathcal{OM}(\phi, \dot{\phi})$ is called the \textit{Onsager--Machlup function}. Here, $\propto$ denotes proportionality as $\varepsilon \to 0$, and $C(\varepsilon, T) = \mathds{P}\bigl(\{ \sup_{0\leq t \leq T}|W_t| \leq \varepsilon\}\bigr)$ is the probability of Brownian motion in the $\varepsilon$-tube independent of $\phi$. The quantity
$$
I(\phi, \dot{\phi}) := \int_0^T \mathcal{OM}(\phi, \dot{\phi}) \, dt
$$
is called the \textit{Onsager--Machlup action functional}.
\end{definition}

The tube probability $\mathds{P}\bigl(\{\|X-\phi\|\leq \varepsilon\}\bigr)$ depends sensitively on the choice of the reference path $\phi$. Once the Onsager--Machlup action functional $I(\phi,\dot{\phi})$ is determined, the most probable transition pathway can be characterized as a minimizer of this functional, that is, as a solution of the associated variational problem. The resulting minimizer $\phi$ then represents the  most probable transition pathway (MPTP) of \eqref{Main-equation}.

Note that the system \eqref{Main-equation} under consideration is driven by multiplicative noise. In this case, the diffusion intensity depends on the state variable, so the ambient Euclidean geometry is no longer compatible with the local fluctuation structure of the process. It is therefore natural, and indeed necessary, to introduce a Riemannian metric induced by the diffusion tensor \cite{Zeitouni1988,Capitaine2000,Grong2024}. More precisely, define the diffusion matrix (viewed as a covariant tensor) by
\begin{equation}\label{eq:diffusion-tensor}
\Sigma(t,\phi_t,\delta_{\phi_t})
:=
\sigma(t,\phi_t,\delta_{\phi_t})
\sigma(t,\phi_t,\delta_{\phi_t})^\top
=
\sum_{\alpha=1}^m
\sigma_\alpha(t,\phi_t,\delta_{\phi_t})
\otimes
\sigma_\alpha(t,\phi_t,\delta_{\phi_t}),
\end{equation}
where $\sigma_\alpha$ denotes the $\alpha$-th column of the diffusion matrix $\sigma$. Assume that $\Sigma(t,\phi_t,\delta_{\phi_t})$ is positive definite. We then equip $\mathbb{R}^d$ with the induced Riemannian metric
\begin{equation}\label{eq:Riemannian metric}
\textsl{g}_{ij}(t,x,\delta_{x})
=
\bigl(\Sigma(t,x,\delta_{x})^{-1}\bigr)_{ij},
\qquad
\textsl{g}
=
\sum_{i,j=1}^d
\textsl{g}_{ij}\,dx^i\otimes dx^j.
\end{equation}
The associated Christoffel symbols are given by
$$
\Gamma_{lj}^{i}
=
\frac{1}{2}
\sum_{r=1}^d
\textsl{g}^{ir}
\left(
\frac{\partial \textsl{g}_{lr}}{\partial x^j}
+
\frac{\partial \textsl{g}_{jr}}{\partial x^l}
-
\frac{\partial \textsl{g}_{lj}}{\partial x^r}
\right),
$$
where $(\textsl{g}^{ij})_{1\le i,j\le d}$ denotes the inverse matrix of $(\textsl{g}_{ij})_{1\le i,j\le d}$. This intrinsic geometry reflects the state-dependent covariance structure of the noise and provides the appropriate local notion of distance and energetic cost for path fluctuations. Consequently, the MPTPs should be characterized relative to this Riemannian structure rather than the standard Euclidean one.

\section{Main Results}\label{sec:3}

\subsection{Onsager--Machlup formula for McKean--Vlasov SDEs}\label{sec:31}

We now state our main result associated with Onsager--Machlup  functional for system \eqref{Main-equation}, whose proof will be presented in Section \ref{sec:33}.

\begin{theorem}[Onsager--Machlup formula for McKean--Vlasov SDEs]\label{thm:om}
Consider the McKean--Vlasov SDE \eqref{Main-equation} with initial condition $X_0=x_0\in\mathbb{R}^d$. Let Assumption~\ref{assump:main} hold, and suppose further that $x_0\in L^{\beta}(\Omega;\mathbb{R}^d)$ for every $\beta>0$. Let $\phi\in C^{x_0}([0,T];\mathbb{R}^d)$ be a reference path satisfying $\phi(0)=x_0$. Then, up to an additive constant, the Onsager--Machlup action functional is given by
\begin{equation}\label{eq:om-action}
I(\phi,\dot{\phi})
=
\int_0^T
\mathcal{OM}\bigl(t,\phi_t,\dot{\phi}_t,\delta_{\phi_t}\bigr)\,dt,
\end{equation}
where the Onsager--Machlup function is defined by
\begin{align}\label{OMf}
\mathcal{OM}\bigl(t,\phi_t,\dot{\phi}_t,\delta_{\phi_t}\bigr)
:=
\frac{1}{2}\Big[&
\bigl(\dot{\phi}_t-\widetilde{b}(t,\phi_t,\delta_{\phi_t})\bigr)^\top
\Sigma^{-1}(t,\phi_t,\delta_{\phi_t})
\bigl(\dot{\phi}_t-\widetilde{b}(t,\phi_t,\delta_{\phi_t})\bigr)
\notag\\
&\quad
+\operatorname{div}\widetilde{b}(t,\phi_t,\delta_{\phi_t})
-\frac{1}{6}R(t,\phi_t,\delta_{\phi_t})
\Big].
\end{align}
In the above expression, $\Sigma$ is the positive definite diffusion matrix defined in \eqref{eq:diffusion-tensor}, $R(t,\phi_t,\delta_{\phi_t})$ denotes the scalar curvature associated with the induced Riemannian metric \eqref{eq:Riemannian metric}, and the modified drift $\widetilde{b}(t,\phi_t,\delta_{\phi_t})$ is given componentwise by
\[
\widetilde{b}^{\,i}
=
b^{i}
-\frac{1}{2}\sum_{l,j}\Sigma^{lj}\Gamma_{lj}^{i},
\]
while its Riemannian divergence is
\[
\operatorname{div}\widetilde{b}
=
\frac{1}{\sqrt{\det(\Sigma^{-1})}}
\sum_i
\frac{\partial}{\partial x^i}
\left(
\widetilde{b}^{\,i}\sqrt{\det(\Sigma^{-1})}
\right).
\]
Here $\Gamma_{lj}^{i}$ are the Christoffel symbols of the metric, and $\det(\cdot)$ denotes the determinant of a matrix.
\end{theorem}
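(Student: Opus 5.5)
The plan is to reduce to the classical Onsager--Machlup theory for distribution-free diffusions on a Riemannian manifold, applied to the Euler-like approximations \eqref{eq:euler-approx}, and then pass to the limit $n\to\infty$ using Proposition~\ref{thm:convergence}. The starting observation is that the initial condition is deterministic, $X_0=x_0\in\mathbb{R}^d$. Hence $\mu_0^{(n)}=\delta_{x_0}=\delta_{\phi_0}$. Moreover, for each fixed $n$, the frozen-law coefficients
\[
b_n(t,x):=b\bigl(t,x,\mu^{(n)}_{\kappa_n(t)}\bigr),\qquad \sigma_n(t,x):=\sigma\bigl(t,x,\mu^{(n)}_{\kappa_n(t)}\bigr)
\]
are deterministic, piecewise-in-time functions of $(t,x)$ only. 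So $X^{(n)}$ solves a classical time-inhomogeneous SDE. Working in the Riemannian metric $\textsl{g}^{(n)}=(\sigma_n\sigma_n^\top)^{-1}$, I will apply the classical tube estimate of Takahashi--Watanabe / Ikeda--Watanabe \cite{Ikeda2014stochastic} (see also \cite{Zeitouni1988,Capitaine2000}). On each subinterval $[t^n_k,t^n_{k+1}]$, and by the Markov property concatenated over $k$, this gives
\[
\mathds{P}\bigl(\|X^{(n)}-\phi\|\le\varepsilon\bigr)\propto C(\varepsilon,T)\exp\Bigl\{-\int_0^T \mathcal{OM}_n(t,\phi_t,\dot\phi_t)\,dt\Bigr\}.
\]
Here $\mathcal{OM}_n$ is the expression \eqref{OMf} with $\delta_{\phi_t}$ replaced by $\mu^{(n)}_{\kappa_n(t)}$ everywhere: in $\widetilde b$, $\Sigma$, the Christoffel symbols, the divergence and the scalar curvature. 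I will assume $\phi$ is $C^2$ and that $b,\sigma$ are smooth enough in $x$ for these objects to exist, as is implicit in the statement.

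The second step is to identify the frozen laws inside the tube. On the event $\{\|X-\phi\|\le\varepsilon\}$, the relevant measures should be replaced by $\delta_{\phi_t}$. My plan is to use the fact that the OM function is obtained as the $\varepsilon\to0$ limit of conditional expectations of Girsanov exponentials given the tube event. For the $n$-th scheme, under this conditioning the state satisfies $|X^{(n)}_s-\phi_s|\le\varepsilon$ for all $s$. The Lipschitz condition Assumption~\ref{assump:main}(2), together with the coupling bound $W_2^2(\mathscr{L}_Y,\delta_{\phi_t})\le\mathbb{E}|Y-\phi_t|^2$ applied to the conditioned law, then controls the measure argument by $O(\varepsilon)$ plus the modulus $|\phi_t-\phi_{\kappa_n(t)}|$. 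I will therefore compare the coefficients evaluated at $\mu^{(n)}_{\kappa_n(t)}$ with those evaluated at $\delta_{\phi_{\kappa_n(t)}}$, and then with those at $\delta_{\phi_t}$. The continuity in Assumption~\ref{assump:main}(3), together with the continuity of $\phi$, shows that $\int_0^T\mathcal{OM}_n\,dt$ differs from $I(\phi,\dot\phi)$ by an error vanishing as $\varepsilon\to0$ and $n\to\infty$. Since differentiating in $x$ commutes with freezing the measure argument, $\operatorname{div}\widetilde b$ and $R$ are computed in $x$ with the measure argument held at $\delta_{\phi_t}$.

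The third step is passing $n\to\infty$ in the tube probabilities themselves. By Proposition~\ref{thm:convergence}, $X^{(n)}\to X$ in $L^2(\Omega;C([0,T];\mathbb{R}^d))$, hence in probability in the sup norm. For $\eta>0$ this gives the sandwich
\[
\mathds{P}(\|X^{(n)}-\phi\|\le\varepsilon-\eta)-\mathds{P}(\|X^{(n)}-X\|>\eta)\le \mathds{P}(\|X-\phi\|\le\varepsilon)\le \mathds{P}(\|X^{(n)}-\phi\|\le\varepsilon+\eta)+\mathds{P}(\|X^{(n)}-X\|>\eta).
\]
Dividing by $C(\varepsilon,T)$, letting $n\to\infty$ first and then $\eta\to0$, and using $C(\varepsilon\pm\eta,T)/C(\varepsilon,T)\to1$ as $\eta\to0$ for fixed $\varepsilon$, I obtain $\mathds{P}(\|X-\phi\|\le\varepsilon)/C(\varepsilon,T)$ as the limit of the approximating ratios. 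Combining this with step two and letting $\varepsilon\to0$ yields \eqref{eq:om-action}, with the additive constant absorbing the normalization.

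The main obstacle is the interchange of the limits $n\to\infty$ and $\varepsilon\to0$. The error $\mathds{P}(\|X^{(n)}-X\|>\eta)$ is small only in absolute terms, whereas $C(\varepsilon,T)$ decays like $\exp(-c/\varepsilon^2)$. So the crude sandwich is useless unless $n$ is allowed to depend on $\varepsilon$. My first attempt will be to choose $n=n(\varepsilon)$ and $\eta=\eta(\varepsilon)=o(\varepsilon^3)$. I will then upgrade the $L^2$ convergence to an exponential tail bound for $\|X^{(n)}-X\|$ on the tube event, via a Gronwall/BDG-type estimate with localisation at radius $|\phi|_\infty+1$ using Assumption~\ref{assump:main}(1), so that this error is $o(C(\varepsilon,T))$. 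If that fails, the fallback is to work directly with $X$ as a classical SDE with deterministic coefficients $b(t,x,\mu_t)$, $\sigma(t,x,\mu_t)$. In that case the Euler scheme is used only to justify regularity of $t\mapsto\mu_t$, and the delicate remaining step is the replacement of $\mu_t$ by $\delta_{\phi_t}$.
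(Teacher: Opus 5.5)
\textbf{There is a genuine gap in your second step, and it cannot be closed.} In the $n$-th scheme the measure argument is $\mu^{(n)}_{\kappa_n(t)}=\mathscr{L}_{X^{(n)}_{\kappa_n(t)}}$. This is the \emph{unconditional} law: a deterministic measure fixed before $\phi$ or $\varepsilon$ enter. Conditioning on the tube event changes the law of the path, not the coefficients of the equation. So the Girsanov density, and hence $\mathcal{OM}_n$, only ever contains $b(t,\cdot,\mu^{(n)}_{\kappa_n(t)})$ and $\sigma(t,\cdot,\mu^{(n)}_{\kappa_n(t)})$. The conditioned law appears nowhere, and your coupling bound controls a quantity that plays no role.

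The distance that does matter is not small. Indeed $W_2(\mu^{(n)}_{\kappa_n(t)},\delta_{\phi_{\kappa_n(t)}})\to W_2(\mu_t,\delta_{\phi_t})=(\mathbb{E}|X_t-\phi_t|^2)^{1/2}$, which is bounded below by $(\mathbb{E}|X_t-\mathbb{E}X_t|^2)^{1/2}$ whatever $\phi$ is. Your first and third steps therefore deliver the classical functional with all coefficients evaluated at $(t,\phi_t,\mu_t)$. Your fallback delivers the same thing directly: with deterministic $x_0$, $X$ is simply a classical diffusion with coefficients $b(t,x,\mu_t)$ and $\sigma(t,x,\mu_t)$.

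Replacing $\mu_t$ by $\delta_{\phi_t}$ changes this functional by a non-constant functional of $\phi$. So it is not a delicate step but a false one. For example, $dX_t=\mathbb{E}[X_t]\,dt+dW_t$ with $X_0=0$ satisfies Assumption~\ref{assump:main} with $\Sigma=I$. Since $m_t=\mathbb{E}X_t$ solves $\dot m=m$, $m_0=0$, we get $m\equiv 0$ and $X=W$. Cameron--Martin then gives the action $\frac12\int_0^T|\dot\phi_t|^2dt$. The stated formula instead gives $\frac12\int_0^T|\dot\phi_t-\phi_t|^2dt$, and the difference $\frac12\int_0^T|\phi_t|^2dt-\frac12|\phi_T|^2$ is not constant, even with both endpoints fixed.

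\textbf{Comparison with the paper.} Your architecture (frozen-law Euler scheme, classical tube asymptotics for each $X^{(n)}$, then a sandwich via $X^{(n)}\to X$) is the paper's. The paper's proof does not overcome this obstacle either. At the start of its Step~2 it simply sets $b^{(n)}(t,\phi_t)=b(t,\phi_t,\delta_{\phi_{\kappa_n(t)}})$, and likewise for $\sigma$. That identity is correct only on the first step $[0,h_n)$, where $\mu^{(n)}_0=\delta_{x_0}$; the paper then lets $n\to\infty$ using continuity in the measure. So you have put your finger on exactly the weak point, but no conditioning argument can justify it.

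Your concern about interchanging $n\to\infty$ and $\varepsilon\to0$ is also legitimate. It applies verbatim to the paper's Step~3, which uses the fixed-$n$ asymptotics as though they held uniformly in $n$. If $n\to\infty$ is taken first at fixed $\varepsilon$, the absolute error $\mathds{P}(\|X^{(n)}-X\|>\eta)$ is harmless. What is really required is a version of the classical tube asymptotics for $X^{(n)}$ that is uniform in $n$, not exponential tail bounds for $\|X^{(n)}-X\|$.

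Even granting that uniformity, plus enough smoothness in $x$ and continuity in the measure of the $x$-derivatives entering $\widetilde b$, $\operatorname{div}\widetilde b$ and $R$, the argument yields the $\mu_t$-version of the formula. For that version the Euler scheme is superfluous, and your fallback is the natural route.
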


\begin{remark}\label{rem:generality}
Theorem~\ref{thm:om} covers in particular the case of additive noise, where the diffusion coefficient is constant (i.e., $\sigma$ is independent of $t$, $x$, and $\mu$). In this situation, the induced Riemannian metric is flat, the curvature term vanishes, and the geometric divergence reduces to the standard Euclidean divergence $\operatorname{div} b$. Consequently, the Onsager--Machlup function takes the simpler form
\begin{equation}\label{eq:om-constant}
\mathcal{OM}\bigl(t, \phi_t, \dot{\phi}_t, \delta_{\phi_t}\bigr)
=
\Bigl|
\frac{\dot{\phi}_t - b(t, \phi_t, \delta_{\phi_t})}{\sigma}
\Bigr|^2
+
\operatorname{div} b(t, \phi_t, \delta_{\phi_t}),
\end{equation}
where $\operatorname{div} b = \sum_{i=1}^d \partial_{x_i} b_i(t, \phi_t, \delta_{\phi_t})$. 
Furthermore, Theorem~\ref{thm:om} is consistent with the classical Onsager--Machlup result \cite{Zeitouni1988} when \eqref{Main-equation} reduces to a standard (distribution-independent) SDE. 

In contrast to existing works such as \cite{Liu2022}, which rely on the Girsanov transformation and are restricted to additive noise and special drift structures, our method, based on an interpolated Euler-like sequence of classical SDEs, applies to general distribution-dependent stochastic systems driven by multiplicative noise, thereby highlighting its broader applicability.
\end{remark}

\begin{remark}\label{Remark 3.3}
The conclusion of Theorem~\ref{thm:om} can be extended, with suitable modifications, to non-Gaussian McKean--Vlasov SDEs, provided that $\int_{|\xi|<1} \xi\,\nu(d\xi) < \infty$ and that an interpolated Euler-like approximation scheme for the McKean--Vlasov dynamics is available; see, e.g., \cite{MR5056525} for related convergence results. Specifically, consider a Lévy-type McKean--Vlasov SDE on $\mathbb{R}^d$ of the form
\begin{equation}\label{Levy}
dX_t
=
b\bigl(t,X_t,\mu_t\bigr)\,dt
+
\sigma\,dW_t
+
\int_{|z|<1} z\,\tilde{N}(dt,dz),
\qquad X(0)=x_0,
\end{equation}
where $N(dt,dz)$ is a Poisson random measure on $\mathbb{R}^+\times\mathbb{R}^d$ with intensity measure $\nu(dz)\,dt$, independent of $W_t$, and $\tilde{N}(dt,dz)=N(dt,dz)-\nu(dz)\,dt$ denotes its compensated version. Under appropriate regularity assumptions, the Onsager--Machlup action functional takes the form, up to an additive constant,
\begin{equation}\label{eq:om-levy}
I(\phi,\dot{\phi})
=
\frac{1}{2}\int_0^T
\left[
\frac{\left|
\dot{\phi}_t
-
b(t,\phi_t,\delta_{\phi_t})
+
\int_{|\xi|<1}\xi\,\nu(d\xi)
\right|^2}{|\sigma|^2}
+
\operatorname{div} b(t,\phi_t,\delta_{\phi_t})
\right]dt.
\end{equation}

The study of transition mechanisms in such jump noise systems has significant potential for real-world applications \cite{Ditlevsen1999observation, Bianchi2010tempered, Bressloff2021}, particularly in biophysics, climate science, and economics.
\end{remark}

\subsection{Variational characterization of the most probable transition pathways}\label{sec:32}

Theorem~\ref{thm:om} enables the identification of the most probable transition pathway (MPTP) as a minimizer of the Onsager--Machlup functional. When transitions between specified states are of interest, the admissible path space can be defined as
$$
\mathcal{A} = \bigl\{ \phi \in C^2([0,T]; \mathbb{R}^d) \mid \phi(0) = z_0,\ \phi(T) = z_T \bigr\},
$$
where $z_0, z_T \in \mathbb{R}^d$ are given endpoints. (More generally, one may consider the union $\bigcup_{T>0} \mathcal{A}$ over varying time horizons.) In this framework, the goal is to determine whether there exists at least one function $\phi^* \in \mathcal{A}$ such that
$$
I(\phi^*, \dot{\phi}^*) = \min_{\phi \in \mathcal{A}} I(\phi, \dot{\phi}),
$$
in which case $\phi^*$ characterizes the MPTP connecting $z_0$ to $z_T$.

Furthermore, by the variational principle, any (local) minimizer of the action functional is a critical point and hence satisfies an associated Euler--Lagrange-type equation, together with the prescribed boundary conditions. In the present distribution-dependent setting, the Onsager--Machlup functional depends on the Dirac measure $\delta_{\phi_t}$ through the measure dependence of the coefficients. As a consequence, the corresponding Euler--Lagrange equation contains additional terms arising from differentiation with respect to the measure argument. These terms are understood in the sense of the $L$-derivative introduced in Definition~\ref{L-def}.

\begin{lemma}[Euler--Lagrange equation with Dirac measure dependence]
\label{lem:el}
Let
$$
\mathcal{L}:[0,T]\times \mathbb{R}^d\times \mathbb{R}^d\times \mathcal{P}_2(\mathbb{R}^d)\to \mathbb{R},
\qquad
(t,x,v,\mu)\mapsto \mathcal{L}(t,x,v,\mu),
$$
be continuously differentiable with respect to $x$ and $v$, and $L$-differentiable with respect to $\mu$ at every Dirac measure. Assume moreover that, along Dirac measures, the maps
$$
(t,x,v,z)\mapsto
\frac{\partial \mathcal L}{\partial x}(t,x,v,\delta_z),
\qquad
(t,x,v,z)\mapsto
\frac{\partial \mathcal L}{\partial v}(t,x,v,\delta_z),
$$
and
$$
(t,x,v,z)\mapsto
D_\mu\mathcal L(t,x,v,\delta_z)(z)
$$
are continuous on
$[0,T]\times\mathbb R^d\times\mathbb R^d\times\mathbb R^d$.
Here, for a Dirac measure $\delta_z$, the element
\(D_\mu\mathcal L(t,x,v,\delta_z)(\cdot)
\in L^2_{\delta_z}(\mathbb R^d;\mathbb R^d)\)
is canonically identified with its value at the support point,
namely \(D_\mu\mathcal L(t,x,v,\delta_z)(z)\in\mathbb R^d\).

For an admissible path $\phi\in\mathcal{A}$, consider the action functional
$$
J(\phi)=\int_0^T \mathcal{L}\bigl(t,\phi_t,\dot{\phi}_t,\delta_{\phi_t}\bigr)\,dt,
$$
with fixed endpoint conditions $\phi(0)=\phi_0$ and $\phi(T)=\phi_T$. If $\phi$ is a critical point of $J$, then $\phi$ satisfies
\begin{equation}\label{EL-dirac}
\frac{d}{dt}
\frac{\partial \mathcal{L}}{\partial v}
\bigl(t,\phi_t,\dot{\phi}_t,\delta_{\phi_t}\bigr)
=
\frac{\partial \mathcal{L}}{\partial x}
\bigl(t,\phi_t,\dot{\phi}_t,\delta_{\phi_t}\bigr)
+
D_\mu \mathcal{L}
\bigl(t,\phi_t,\dot{\phi}_t,\delta_{\phi_t}\bigr)(\phi_t).
\end{equation}
\end{lemma}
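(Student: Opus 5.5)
We will follow the classical first-variation argument. The one new ingredient is to differentiate the map $z\mapsto \mathcal L(t,x,v,\delta_z)$ through its measure argument using Definition~\ref{L-def}.

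Fix a test direction $\eta\in C^2([0,T];\mathbb{R}^d)$ with $\eta(0)=\eta(T)=0$. Then $\phi+s\eta\in\mathcal A$ for every $s\in\mathbb R$, and criticality means that $\frac{d}{ds}J(\phi+s\eta)\big|_{s=0}=0$.

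\textbf{Derivative in the measure argument.} The key pointwise computation concerns the shifted Dirac measure. For a fixed $z$ and $w\in\mathbb R^d$, take the constant map $\psi\equiv w$. Then $\psi\in L^2_{\delta_z}$ with $\delta_z(|\psi|^2)=|w|^2$, and
\[
\delta_z\circ(\mathrm{Id}+\psi)^{-1}=\delta_{z+w}.
\]
Definition~\ref{L-def} then gives
\[
\mathcal L(t,x,v,\delta_{z+w})-\mathcal L(t,x,v,\delta_z)=\bigl\langle D_\mu\mathcal L(t,x,v,\delta_z)(z),w\bigr\rangle+o(|w|).
\]
So the function $F(t,x,v,z):=\mathcal L(t,x,v,\delta_z)$ is differentiable in $z$, with $\partial_zF=D_\mu\mathcal L(\cdot,\delta_z)(z)$. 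Here we use the canonical identification of $D_\mu\mathcal L(\cdot,\delta_z)$ with its value at the support point.

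\textbf{Differentiability of $F$.} By hypothesis the partials $\partial_xF$, $\partial_vF$ and $\partial_zF$ are continuous in $(t,x,v,z)$. Hence $F$ is $C^1$ in $(x,v,z)$, with partials jointly continuous in $t$ as well. This is the standard theorem that continuous partial derivatives imply Fréchet differentiability, and it is the step we expect to need the most care. Continuity of the $D_\mu$ term along Dirac measures is exactly what makes it work. One also needs the $x$- and $v$-partials of $F$ to agree with $\partial\mathcal L/\partial x$ and $\partial\mathcal L/\partial v$ at $\mu=\delta_z$ with $z$ frozen, which is immediate.

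\textbf{First variation.} We have $J(\phi)=\int_0^T F(t,\phi_t,\dot\phi_t,\phi_t)\,dt$. Since $\phi,\eta\in C^2$, the values $(t,\phi_t+s\eta_t,\dot\phi_t+s\dot\eta_t,\phi_t+s\eta_t)$ for $|s|\le1$ stay in a compact set. On that set the partials of $F$ are bounded and uniformly continuous, so we may differentiate under the integral sign by dominated convergence. This gives
\[
0=\int_0^T\Bigl[\bigl\langle \partial_x\mathcal L+D_\mu\mathcal L(\cdot)(\phi_t),\eta_t\bigr\rangle+\bigl\langle\partial_v\mathcal L,\dot\eta_t\bigr\rangle\Bigr]dt,
\]
where all terms are evaluated at $(t,\phi_t,\dot\phi_t,\delta_{\phi_t})$. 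Note that the $x$-slot and the measure slot both move along $\eta$, which produces the sum of the two gradients.

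\textbf{Conclusion.} The map $t\mapsto\partial_v\mathcal L(t,\phi_t,\dot\phi_t,\delta_{\phi_t})$ is only known to be continuous, so we avoid integrating by parts. Instead we use the du Bois-Reymond lemma. Set
\[
A(t)=\int_0^t\bigl(\partial_x\mathcal L+D_\mu\mathcal L(\cdot)(\phi_s)\bigr)ds,
\]
and integrate the first term by parts against $\eta$; the boundary terms vanish because $\eta(0)=\eta(T)=0$. This yields
\[
\int_0^T\bigl\langle\partial_v\mathcal L-A(t),\dot\eta_t\bigr\rangle\,dt=0
\]
for all admissible $\eta$. Hence $\partial_v\mathcal L-A$ is constant on $[0,T]$.

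It follows that $t\mapsto \partial_v\mathcal L(t,\phi_t,\dot\phi_t,\delta_{\phi_t})$ is $C^1$, with derivative equal to $A'(t)$. This is precisely \eqref{EL-dirac}.
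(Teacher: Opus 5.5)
Your proof is correct and follows the same first-variation route as the paper. The key step is the same: you write $\delta_{z+w}=\delta_z\circ(\mathrm{Id}+\psi)^{-1}$ with the constant shift $\psi\equiv w$ to identify the measure contribution $D_\mu\mathcal L(t,x,v,\delta_z)(z)$. Your reduction to the $C^1$ function $F(t,x,v,z)=\mathcal L(t,x,v,\delta_z)$ and your use of the du Bois-Reymond lemma make explicit two justifications the paper handles more briefly, through its $\Delta_1,\Delta_2,\Delta_3$ splitting and a distributional integration by parts: differentiating under the integral sign, and the a priori regularity of $t\mapsto\partial_v\mathcal L(t,\phi_t,\dot\phi_t,\delta_{\phi_t})$.
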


\begin{proof}
Let $\eta\in C_c^\infty((0,T);\mathbb{R}^d)$, and define the admissible variation
$$
\phi_t^\varepsilon=\phi_t+\varepsilon \eta_t,
$$
so that
$$
\dot\phi_t^\varepsilon
=
\dot\phi_t+\varepsilon\dot\eta_t,
\qquad
\delta_{\phi_t^\varepsilon}
=
\delta_{\phi_t+\varepsilon\eta_t}.
$$
Since $\eta$ has compact support in $(0,T)$, the perturbed path $\phi^\varepsilon$ satisfies the same endpoint conditions as $\phi$. Because $\phi$ is a critical point of $J$, we have
$$
\left.\frac{d}{d\varepsilon}J(\phi^\varepsilon)\right|_{\varepsilon=0}=0.
$$

For convenience, write (for fixed $t$)
$$
x=\phi_t,\qquad v=\dot\phi_t,\qquad a=\eta_t,\qquad b=\dot\eta_t,
$$
and define
$$
\widehat{\mathcal L}_t(\varepsilon)
:=
\mathcal L \bigl( t, x+\varepsilon a, v+\varepsilon b, \delta_{x+\varepsilon a} \bigr).
$$
We compute $\frac{d}{d\varepsilon}\widehat{\mathcal L}_t(\varepsilon)$ by splitting the increment:
$$
\begin{aligned}
\widehat{\mathcal L}_t(\varepsilon)-\widehat{\mathcal L}_t(0)
=
& \Bigl[ \mathcal L (t,x+\varepsilon a,v+\varepsilon b,\delta_{x+\varepsilon a}) - \mathcal L (t,x,v+\varepsilon b,\delta_{x+\varepsilon a}) \Bigr] \\
&+ \Bigl[ \mathcal L (t,x,v+\varepsilon b,\delta_{x+\varepsilon a}) - \mathcal L (t,x,v,\delta_{x+\varepsilon a}) \Bigr] \\
&+ \Bigl[ \mathcal L (t,x,v,\delta_{x+\varepsilon a}) - \mathcal L (t,x,v,\delta_x) \Bigr] \\
=:&\ \Delta_1+\Delta_2+\Delta_3.
\end{aligned}
$$
The first two terms are handled by ordinary differentiability in $x$ and $v$. Thus,
$$
\frac{\Delta_1}{\varepsilon}\to
\left\langle \frac{\partial \mathcal L}{\partial x} (t,x,v,\delta_x), a \right\rangle
\quad\text{and}\quad
\frac{\Delta_2}{\varepsilon}\to
\left\langle \frac{\partial \mathcal L}{\partial v} (t,x,v,\delta_x), b \right\rangle,
$$
as $\varepsilon \to 0$. It remains to compute the Dirac measure contribution. Observe that
$$
\delta_{x+\varepsilon a}
=
\delta_x\circ(\mathrm{Id}+\psi_\varepsilon)^{-1},
$$
where one may choose $\psi_\varepsilon\equiv \varepsilon a$, viewed as an element of
$L_{\delta_x}^2(\mathbb R^d;\mathbb R^d)$.
According to the definition of the $L$-derivative,
$$
\frac{\Delta_3}{\varepsilon}
\to
\left\langle D_\mu\mathcal L(t,x,v,\delta_x)(x), a \right\rangle,\;\;\text{as}\;\;\varepsilon\to 0.
$$
Combining the three limits, we obtain
\begin{equation*}
    \begin{aligned}
    \left. \frac{d}{d\varepsilon} J(\phi+\varepsilon\eta) \right|_{\varepsilon=0}
    =& \int_0^T
    \left\langle
    \frac{\partial \mathcal L}{\partial x}
    \bigl(t,\phi_t,\dot\phi_t,\delta_{\phi_t}\bigr)
    +
    D_\mu\mathcal L
    \bigl(t,\phi_t,\dot\phi_t,\delta_{\phi_t}\bigr)(\phi_t),
    \eta_t
    \right\rangle dt \\
    &+
    \int_0^T
    \left\langle
    \frac{\partial \mathcal L}{\partial v}
    \bigl(t,\phi_t,\dot\phi_t,\delta_{\phi_t}\bigr),
    \dot\eta_t
    \right\rangle dt.
    \end{aligned}
\end{equation*}
Integrating the second term on the right-hand side by parts and using
$\eta(0)=\eta(T)=0$, we obtain
$$
0
=
\int_0^T
\left\langle
\frac{\partial \mathcal L}{\partial x}
\bigl(t,\phi_t,\dot\phi_t,\delta_{\phi_t}\bigr)
+
D_\mu\mathcal L
\bigl(t,\phi_t,\dot\phi_t,\delta_{\phi_t}\bigr)(\phi_t)
-
\frac{d}{dt}
\frac{\partial \mathcal L}{\partial v}
\bigl(t,\phi_t,\dot\phi_t,\delta_{\phi_t}\bigr),
\eta_t
\right\rangle dt.
$$
Since $\eta$ is arbitrary, we conclude that
$$
\frac{d}{dt}
\frac{\partial \mathcal L}{\partial v}
\bigl(t,\phi_t,\dot\phi_t,\delta_{\phi_t}\bigr)
=
\frac{\partial \mathcal L}{\partial x}
\bigl(t,\phi_t,\dot\phi_t,\delta_{\phi_t}\bigr)
+
D_\mu\mathcal L
\bigl(t,\phi_t,\dot\phi_t,\delta_{\phi_t}\bigr)(\phi_t)
$$
in the sense of distributions on $(0,T)$. Under the stated continuity assumptions, the above identity holds pointwise on $(0,T)$, and hence extends to $[0,T]$ by continuity.
\end{proof}

With Lemma~\ref{lem:el} at hand, we now apply it to the Lagrangian associated with the Onsager--Machlup formula in Theorem~\ref{thm:om}. For a scalar coefficient \(f=f(t,x,\mu)\), define, along Dirac
measures,
\[
{\bf D}_i f(t,x,\delta_z)
:=
\frac{\partial f}{\partial x_i}(t,x,\delta_z)
+
\bigl[D_\mu f(t,x,\delta_z)(z)\bigr]_i,
\qquad i=1,\ldots,d.
\]
For vector-valued or matrix-valued coefficients, this definition is
understood componentwise. Thus, for a vector field \(g\), 
$({\bf D}g)_{ki}
=
{\bf D}_i g_k,$ 
and for a matrix-valued coefficient \(M\), \({\bf D}M\) is the
third-order tensor whose \(i\)-th slice is \({\bf D}_iM\).

\begin{theorem}[Variational characterization of MPTPs]\label{thm:om-EL}
Consider the Onsager--Machlup action functional
\[
I(\phi)
=
\int_0^T
\mathcal{OM}\bigl(t,\phi_t,\dot{\phi}_t,\delta_{\phi_t}\bigr)\,dt,
\]
where $\mathcal{OM}$ is given by \eqref{OMf} and satisfies the regularity assumptions of Lemma~\ref{lem:el}. Let $\phi\in C^2([0,T];\mathbb R^d)$ be a critical point of $I$ with $\phi(0)=z_0$ and $\phi(T)=z_T$. Then, for $t\in[0,T]$, $\phi$ satisfies
\begin{equation}\label{eq:EL-OM-explicit}
\begin{aligned}
\ddot{\phi}_t
&=
\partial_t \widetilde b
+
\Sigma\,({\bf D}\widetilde b)^\top \Sigma^{-1}\widetilde b
+
\Big[
{\bf D}\widetilde b
-
\Sigma({\bf D}\widetilde b)^\top\Sigma^{-1}
\Big]\dot{\phi}_t
\\
&\quad
+
\frac12\Sigma
\Big[
(\dot{\phi}_t-\widetilde b)^\top
({\bf D}\Sigma^{-1})
(\dot{\phi}_t-\widetilde b)
+
{\bf D}(\operatorname{div}\widetilde b)
-
\frac16{\bf D}R
\Big]
\\
&\quad
-
\Sigma
\left(
\partial_t \Sigma^{-1}
+
({\bf D}\Sigma^{-1})\dot{\phi}_t
\right)
(\dot{\phi}_t-\widetilde b).
\end{aligned}
\end{equation}
Here every occurrence of $\widetilde b$, $\Sigma$, $\Sigma^{-1}$, $R$, and their derivatives in \eqref{eq:EL-OM-explicit} is evaluated at $(t,\phi_t,\delta_{\phi_t})$.

In particular, when $d=1$ and $\sigma$ is constant, so that $\widetilde b=b$, $\Sigma=\sigma^2$, and $R=0$, the above equation reduces to
\begin{equation}\label{eq:EL-1d-constant}
\ddot{\phi}_t
=
\partial_t b
+
b\,{\bf D}b
+
\frac{\sigma^2}{2}\,{\bf D}(\partial_x b),
\end{equation}
or, equivalently,
\[
\ddot{\phi}_t
=
\partial_t b
+
b\,\partial_x b
+
b\,D_\mu b(t,\phi_t,\delta_{\phi_t})(\phi_t)
+
\frac{\sigma^2}{2}\partial_x^2 b
+
\frac{\sigma^2}{2}
D_\mu(\partial_x b)(t,\phi_t,\delta_{\phi_t})(\phi_t).
\]
\end{theorem}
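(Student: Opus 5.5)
We apply Lemma~\ref{lem:el} directly with the Lagrangian
\[
\mathcal L(t,x,v,\mu)=\tfrac12 (v-\widetilde b)^\top \Sigma^{-1}(v-\widetilde b)+\tfrac12\operatorname{div}\widetilde b-\tfrac1{12}R,
\]
where all coefficients are evaluated at $(t,x,\mu)$. The key observation is that along the path the measure argument is $\delta_{\phi_t}$. In \eqref{EL-dirac}, the $x$-derivative and the $L$-derivative evaluated at the support point always appear together, so they combine into the operator ${\bf D}$. Thus \eqref{EL-dirac} reads
\[
\frac{d}{dt}\partial_v\mathcal L={\bf D}\mathcal L,
\]
where ${\bf D}$ acts on $\mathcal L$ through its dependence on $(x,\mu)$ and $v$ is held fixed. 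The chain rule for $L$-derivatives composed with smooth maps (here a quadratic form) gives ${\bf D}$ of products by the Leibniz rule. This is legitimate because $D_\mu$ is a Fr\'echet derivative in $L^2_{\delta_z}$, which at a Dirac mass is a derivative in $\mathbb R^d$.

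\textbf{Step 1: the right-hand side.} With $u:=\dot\phi_t-\widetilde b$ and $\Sigma^{-1}$ symmetric,
\[
{\bf D}\mathcal L=-({\bf D}\widetilde b)^\top\Sigma^{-1}u+\tfrac12 u^\top({\bf D}\Sigma^{-1})u+\tfrac12{\bf D}(\operatorname{div}\widetilde b)-\tfrac1{12}{\bf D}R,
\]
and $\partial_v\mathcal L=\Sigma^{-1}u$.

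\textbf{Step 2: the total time derivative.} Since $\phi\in C^2$, the total time derivative of any coefficient $f(t,\phi_t,\delta_{\phi_t})$ is
\[
\frac{d}{dt}f=\partial_t f+({\bf D}f)\dot\phi_t.
\]
This again uses that $t\mapsto\delta_{\phi_t}$ is differentiated through the $L$-derivative at the support point; it follows from the same splitting as in the proof of Lemma~\ref{lem:el}. Applying this rule gives
\[
\frac{d}{dt}(\Sigma^{-1}u)=\bigl(\partial_t\Sigma^{-1}+({\bf D}\Sigma^{-1})\dot\phi_t\bigr)u+\Sigma^{-1}\bigl(\ddot\phi_t-\partial_t\widetilde b-({\bf D}\widetilde b)\dot\phi_t\bigr).
\]

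\textbf{Step 3: solving for $\ddot\phi_t$.} Equate the two sides and multiply by $\Sigma$. Expanding $-\Sigma({\bf D}\widetilde b)^\top\Sigma^{-1}u$ produces the terms
\[
\Sigma({\bf D}\widetilde b)^\top\Sigma^{-1}\widetilde b \quad\text{and}\quad -\Sigma({\bf D}\widetilde b)^\top\Sigma^{-1}\dot\phi_t .
\]
Together with $({\bf D}\widetilde b)\dot\phi_t$ and $\partial_t\widetilde b$, these give the first line of \eqref{eq:EL-OM-explicit}. The remaining terms give the second and third lines, since $\tfrac12 u^\top({\bf D}\Sigma^{-1})u$ is a vector whose $i$-th entry is $\tfrac12 u^\top({\bf D}_i\Sigma^{-1})u$. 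The identity holds on all of $[0,T]$ by the continuity part of Lemma~\ref{lem:el}.

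\textbf{Step 4: the one-dimensional special case.} For $d=1$ and constant $\sigma$, we have ${\bf D}\Sigma^{-1}=0$, $\partial_t\Sigma^{-1}=0$, and $R=0$. The two $\dot\phi_t$ terms cancel because all quantities are scalars, and $\Sigma({\bf D}b)\Sigma^{-1}b=b\,{\bf D}b$. This yields \eqref{eq:EL-1d-constant}, and expanding ${\bf D}$ gives the displayed equivalent form.

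The main obstacle is bookkeeping rather than analysis. One must fix tensor conventions consistently: the ordering of indices in ${\bf D}\widetilde b$, and the meaning of ${\bf D}\Sigma^{-1}$ contracted with $\dot\phi_t$ in Step 2 versus its contraction with $u\otimes u$ in Step 1. I would fix $({\bf D}g)_{ki}={\bf D}_ig_k$, as the paper does, and verify each contraction index by index. The one genuinely analytic point is the chain rule in Step 2 for $t\mapsto\delta_{\phi_t}$. I would justify it by the increment splitting of Lemma~\ref{lem:el}, with $\psi\equiv\phi_{t+h}-\phi_t$, together with the assumed continuity of $D_\mu f(\cdot,\delta_z)(z)$.
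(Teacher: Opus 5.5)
Your proposal is correct and follows the paper's proof essentially step for step. You apply Lemma~\ref{lem:el} to get $\frac{d}{dt}\partial_v\mathcal{OM}={\bf D}\mathcal{OM}$, compute ${\bf D}\mathcal{OM}$ and $\frac{d}{dt}(\Sigma^{-1}(\dot\phi_t-\widetilde b))$ by the Leibniz and chain rules along $t\mapsto(t,\phi_t,\delta_{\phi_t})$, multiply by $\Sigma$, regroup the drift-gradient terms, and then specialize to $d=1$ — your added justification of these rules via $L^2_{\delta_z}\cong\mathbb{R}^d$ makes explicit what the paper only asserts.
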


\begin{proof}
We regard the Onsager--Machlup integrand as a Lagrangian:
\[
\mathcal{OM}(t,x,v,\mu)
=
\frac12
\left[
(v-\widetilde b)^\top
\Sigma^{-1}
(v-\widetilde b)
+
\operatorname{div}\widetilde b
-
\frac16 R
\right],
\]
where all coefficients are evaluated at $(t,x,\mu)$. Applying Lemma~\ref{lem:el} and using the notation ${\bf D}$, any critical point satisfies
\begin{equation}\label{el-om}
\frac{d}{dt}
\frac{\partial \mathcal{OM}}{\partial v}
\bigl(t,\phi_t,\dot\phi_t,\delta_{\phi_t}\bigr)
=
{\bf D}\mathcal{OM}
\bigl(t,\phi_t,\dot\phi_t,\delta_{\phi_t}\bigr).
\end{equation}

On the one hand, since ${\bf D}$ differentiates both the state variable and the Dirac support, we have
\begin{equation}\label{el-om-1}
{\bf D}\mathcal{OM}
=
-({\bf D}\widetilde b)^\top\Sigma^{-1}F_t
+
\frac12
F_t^\top({\bf D}\Sigma^{-1})F_t
+
\frac12{\bf D}(\operatorname{div}\widetilde b)
-
\frac1{12}{\bf D}R,
\end{equation}
where
\[
F_t:=\dot{\phi}_t-\widetilde b(t,\phi_t,\delta_{\phi_t}).
\]
On the other hand,
\[
\frac{\partial \mathcal{OM}}{\partial v}
=
\Sigma^{-1}F_t.
\]
Applying the product rule along the curve $t\mapsto (t,\phi_t,\delta_{\phi_t})$, we obtain
\begin{align}\label{el-om-2}
\frac{d}{dt}
\frac{\partial \mathcal{OM}}{\partial v}
=
\frac{d}{dt}\bigl(\Sigma^{-1}F_t\bigr)
=
\left(
\frac{\partial \Sigma^{-1}}{\partial t}
+
({\bf D}\Sigma^{-1})\dot{\phi}_t
\right)F_t
+
\Sigma^{-1}
\left[
\ddot{\phi}_t
-
\frac{\partial \widetilde b}{\partial t}
-
({\bf D}\widetilde b)\dot{\phi}_t
\right].
\end{align}

Substituting \eqref{el-om-1} and \eqref{el-om-2} into \eqref{el-om}, then multiplying by $\Sigma$ on both sides and solving for $\ddot{\phi}_t$, we get
\begin{align*}
\ddot{\phi}_t
&=
\frac{\partial \widetilde b}{\partial t}
+
({\bf D}\widetilde b)\dot{\phi}_t
-
\Sigma
\left(
\frac{\partial \Sigma^{-1}}{\partial t}
+
({\bf D}\Sigma^{-1})\dot{\phi}_t
\right)F_t
\\
&\quad
-
\Sigma({\bf D}\widetilde b)^\top\Sigma^{-1}F_t
+
\frac12\Sigma
\left[
F_t^\top({\bf D}\Sigma^{-1})F_t
+
{\bf D}(\operatorname{div}\widetilde b)
-
\frac16{\bf D}R
\right].
\end{align*}
Using $F_t=\dot{\phi}_t-\widetilde b$, the drift-gradient terms become
\[
({\bf D}\widetilde b)\dot{\phi}_t
-
\Sigma({\bf D}\widetilde b)^\top\Sigma^{-1}F_t
=
\Sigma({\bf D}\widetilde b)^\top\Sigma^{-1}\widetilde b
+
\Big[
{\bf D}\widetilde b
-
\Sigma({\bf D}\widetilde b)^\top\Sigma^{-1}
\Big]\dot{\phi}_t.
\]
This proves \eqref{eq:EL-OM-explicit}.

Finally, suppose $d=1$ and $\sigma$ is constant. Then $\Sigma=\sigma^2$, $\widetilde b=b$, $R=0$, and all terms containing ${\bf D}\Sigma^{-1}$ or $\partial_t\Sigma^{-1}$ vanish. In the scalar case, the identity
\[
{\bf D}b-\Sigma({\bf D}b)\Sigma^{-1}=0
\]
yields \eqref{eq:EL-1d-constant}.
\end{proof}

\subsection{Proof of Theorem \ref{thm:om}}\label{sec:33}

We are now ready to demonstrate how to derive the Onsager--Machlup functional for the McKean--Vlasov SDE \eqref{Main-equation}. Based on Definition~\ref{def:om}, we aim to estimate the probability that the solution process $X$ of \eqref{Main-equation} lies within a tube of radius $\varepsilon$ around a reference path $\phi$, i.e., $\mathds{P}\bigl(\{\|X - \phi\| \leq \varepsilon\}\bigr)$. The derivation is divided into the following three steps.

{\bf Step 1: Convergence of the approximating solutions.}\par
By Proposition~\ref{thm:convergence}, the Euler-type sequence $\{X^{(n)}\}_{n\ge 1}$ converges to the unique strong solution $X$ of the McKean–Vlasov equation \eqref{Main-equation} in the $L^2(\Omega;C([0,T];\mathbb{R}^d))$-sense:
$$
\lim_{n\to\infty} \Bigl(\mathbb{E}\Bigl[\sup_{0\le t\le T}|X^{(n)}(t)-X(t)|^2\Bigr]\Bigr)^{1/2}=0.
$$
Consequently, there exists a subsequence (still denoted by $\{X^{(n)}\}$ for simplicity) such that, for $\mathds{P}$-almost every $\omega \in \Omega$,
$$
\|X^{(n)}(\omega) -X(\omega)\|=\sup_{t\in[0,T]}|X^{(n)}_t(\omega) -X_t(\omega)|\to 0.
$$
For any $\delta>0$, define
$$
A_n:=\bigcap_{m=n}^{\infty}\left\{\|X^{(m)}-X\|\leq \delta\right\}.
$$
Then $\{A_n\}_{n\geq 1}$ is an increasing sequence and $\mathds{P}\left(\bigcup_{n=1}^{\infty}A_n\right)=1$.

{\bf Step 2: Asymptotic probability for the Euler-type approximation sequence.}\par
Recall the Euler-type approximation sequence $\{X^{(n)}\}_{n \geq 1}$ constructed in Section~\ref{subsec:euler-sequence}. For each fixed $n$, the process $X^{(n)}$ satisfies the classical (distribution-free) SDE
\begin{equation}\label{eq:euler-approx-R}
dX^{(n)}_t = b^{(n)}(t, X^{(n)}_t) \, dt + \sigma^{(n)}(t, X^{(n)}_t) \, dW_t,
\end{equation}
where, for convenience, we introduce the shorthand notation
$$
b^{(n)}(t, X_t^{(n)}) = b\bigl(t, X^{(n)}_t, \mu_{\kappa_n(t)}^{(n)}\bigr), \qquad
\sigma^{(n)}(t, X_t^{(n)}) = \sigma\bigl(t, X^{(n)}_t, \mu_{\kappa_n(t)}^{(n)}\bigr).
$$
Evaluated at the reference path $\phi$, we have
$$
b^{(n)}(t,\phi_t)=b\bigl(t,\phi_t,\delta_{\phi_{\kappa_n(t)}}\bigr),\qquad
\sigma^{(n)}(t,\phi_t)=\sigma\bigl(t,\phi_t,\delta_{\phi_{\kappa_n(t)}}\bigr),
$$
with $\delta_{\phi_{\kappa_n(t)}}$ denoting the Dirac measure at $\phi_{\kappa_n(t)}$. The associated Onsager--Machlup functional $I^{(n)}(\phi,\dot{\phi})$ is then given by the classical geometric expression \cite{Zeitouni1988}:
\begin{align}\label{OM-classical}
I^{(n)}(\phi, \dot{\phi})
=
\frac{1}{2} \int_0^T
\Bigl(&
\bigl( \dot{\phi}_t - \widetilde{b}^{(n)}(t, \phi_t) \bigr)^\top
(\Sigma^{(n)}(t, \phi_t))^{-1}
\bigl( \dot{\phi}_t - \widetilde{b}^{(n)}(t, \phi_t) \bigr)\notag\\
&+\operatorname{div} \widetilde{b}^{(n)}(t, \phi_t)
-
\frac{1}{6} R^{(n)}(t, \phi_t)
\Bigr)\,dt,
\end{align}
where $\Sigma^{(n)}(t, \phi_t)=\sigma^{(n)}(t, \phi_t)\sigma^{(n)}(t, \phi_t)^\top$, $R^{(n)}(t, \phi_t)$ denotes the scalar curvature associated with the Riemannian metric induced by $(\Sigma^{(n)})^{-1}$, and the modified drift $\widetilde{b}^{(n)}$ is defined by
\begin{equation}\label{b}
\widetilde{b}^{(n),i}
=
b^{(n),i}
-
\frac{1}{2}\sum_{l,j} (\Sigma^{(n)})^{lj}\Gamma_{lj}^{(n),i},
\end{equation}
with $\Gamma_{lj}^{(n),i}$ the corresponding Christoffel symbols. The Riemannian divergence of $\widetilde{b}^{(n)}$ is given by
$$
\operatorname{div} \widetilde{b}^{(n)}
=
\frac{1}{\sqrt{\det\bigl((\Sigma^{(n)})^{-1}\bigr)}}
\sum_i
\frac{\partial}{\partial x^i}
\left(
\widetilde{b}^{(n),i}
\sqrt{\det\bigl((\Sigma^{(n)})^{-1}\bigr)}
\right).
$$
For the classical SDE \eqref{eq:euler-approx-R}, the Onsager--Machlup theory yields the asymptotic relation that, for sufficiently small $\varepsilon>0$,
\begin{equation}\label{eq:classical-asymptotic}
\mathds{P}\bigl(\{\|X^{(n)} - \phi\| \leq \varepsilon\}\bigr) \propto C(\varepsilon,T) \exp\bigl\{ -I^{(n)}(\phi, \dot{\phi}) \bigr\},
\end{equation}
where $\propto$ denotes asymptotic proportionality.

{\bf Step 3: Estimation of the tube probability.}\par
We combine Steps 1 and 2 to obtain the desired asymptotics for the McKean--Vlasov process $X$. Fix a reference path $\phi$ and a sufficiently small $\varepsilon>0$.
Using the almost‑sure convergence from Step 1, we have
\begin{align*}
\mathds{P}\bigl(\{\|X-\phi\|\leq \varepsilon\}\bigr)
&=
\mathds{P}\left(\{\|X-\phi\|\leq \varepsilon\}\cap \bigcup_{n=1}^{\infty}A_n\right) \notag \\
&=
\mathds{P}\left(\bigcup_{n=1}^{\infty}\bigl(\{\|X-\phi\|\leq \varepsilon\}\cap A_n\bigr)\right) \notag \\
&=
\lim_{n\to\infty}
\mathds{P}\left(\{\|X-\phi\|\leq \varepsilon\}\cap A_n\right),
\end{align*}
where the last equality follows from the continuity from below of probability measures. Since
$$
A_n \subset \left\{\|X^{(n)}-X\|\leq \delta\right\},
$$
for any $\delta>0$, we obtain
\begin{align*}
\mathds{P}\bigl(\{\|X-\phi\|\leq \varepsilon\}\bigr)
&\leq
\lim_{n\to\infty}
\mathds{P}\left(\{\|X-\phi\|\leq \varepsilon\}\cap \{\|X^{(n)}-X\|\leq \delta\}\right) \\
&\leq
\lim_{n\to\infty}
\mathds{P}\left(\{\|X^{(n)}-\phi\|\leq \varepsilon+\delta\}\right),
\end{align*}
where the last inequality follows from the triangle inequality. By \eqref{eq:classical-asymptotic}, we infer that 
\begin{align}\label{UB}
\mathds{P}\bigl(\{\|X-\phi\|\leq \varepsilon\}\bigr)
\leq
C(\varepsilon+\delta,T)\lim_{n\to\infty}\exp\bigl\{-I^{(n)}(\phi,\dot{\phi})\bigr\}.
\end{align}
Now note that the Wasserstein distance satisfies
$$
\lim_{n\to\infty}\sup_{0\leq t\leq T}W_2^2\bigl(\delta_{\phi_{\kappa_n(t)}},\delta_{\phi_t}\bigr)
\leq
\lim_{n\to\infty}\sup_{0\leq t\leq T}|\phi_t-\phi_{\kappa_n(t)}|^2
\leq
\lim_{n\to\infty}h_n
=0.
$$
By Assumption~\ref{assump:main}(3), it follows that for each $t\in[0,T]$,
$$
b^{(n)}(t,\phi_t)
\to
b\bigl(t,\phi_t,\delta_{\phi_t}\bigr),
\qquad
\sigma^{(n)}(t,\phi_t)
\to
\sigma\bigl(t,\phi_t,\delta_{\phi_t}\bigr),
$$
as $n\to\infty$. Consequently, by the continuity of the coefficients in \eqref{OM-classical} and the dominated convergence theorem, we have
$$
I^{(n)}(\phi,\dot{\phi})
\to
I(\phi,\dot{\phi}),
$$
where 
\begin{align}
    I(\phi, \dot{\phi}) = \frac{1}{2} \int_0^T \Bigl(& \bigl( \dot{\phi}_t - \widetilde{b}(t, \phi_t, \delta_{\phi_t}) \bigr)^\top \Sigma^{-1}(t, \phi_t, \delta_{\phi_t}) \bigl( \dot{\phi}_t - \widetilde{b}(t, \phi_t, \delta_{\phi_t}) \bigr) \notag\\
    &+ \operatorname{div} \widetilde{b}(t, \phi_t, \delta_{\phi_t}) - \frac{1}{6} R(t, \phi_t) \Bigr) \, dt.
\end{align}
Letting $\delta\to 0$ in \eqref{UB} and using the continuity of $C(\varepsilon,T)$ and the exponential function, we derive the upper bound
$$
\mathds{P}\bigl(\{\|X-\phi\|\leq \varepsilon\}\bigr)
\leq
C(\varepsilon,T)\exp\bigl\{-I(\phi,\dot{\phi})\bigr\},
\qquad \varepsilon\to 0.
$$

For the lower bound, we proceed similarly. 
For any $\delta>0$,
$$
\{\|X^{(n)}-\phi\|\leq \varepsilon-\delta\}\cap \{\|X^{(n)}-X\|\leq \delta\}
\subset
\{\|X-\phi\|\leq \varepsilon\}.
$$
Taking probabilities and using the convergence from Step 1,
\begin{align}\label{LB}
\mathbb{P}\bigl(\{\|X-\phi\|\le\varepsilon\}\bigr)
&\ge\lim_{n\to\infty}\mathbb{P}\bigl(\{\|X^{(n)}-\phi\|\le\varepsilon-\delta\}\cap\{\|X^{(n)}-X\|\le\delta\}\bigr)\notag\\
&\ge\lim_{n\to\infty}\Bigl(\mathbb{P}\bigl(\{\|X^{(n)}-\phi\|\le\varepsilon-\delta\}\bigr)-\mathbb{P}\bigl(\{\|X^{(n)}-X\|>\delta\}\bigr)\Bigr)\notag\\
&=\lim_{n\to\infty}\mathbb{P}\bigl(\{\|X^{(n)}-\phi\|\le\varepsilon-\delta\}\bigr)\quad\bigl(\mathbb{P}(\{\|X^{(n)}-X\|>\delta\})\to0\bigr)\notag\\
&\propto C(\varepsilon-\delta,T)\,\lim_{n\to\infty}\exp\bigl\{-I^{(n)}(\phi,\dot\phi)\bigr\}.
\end{align}
Letting $\delta\to 0$ and using again the continuity of $C(\cdot,T)$ and the exponential function, we obtain the lower bound
$$
\mathds{P}\bigl(\{\|X-\phi\|\leq \varepsilon\}\bigr)
\geq
C(\varepsilon,T)\exp\bigl\{-I(\phi,\dot{\phi})\bigr\},
\qquad \varepsilon\to 0.
$$

Combining the upper and lower bounds, we conclude that
$$
\mathds{P}\bigl(\{\|X-\phi\|\leq \varepsilon\}\bigr)
\propto
C(\varepsilon,T)\exp\bigl\{-I(\phi,\dot{\phi})\bigr\},
\qquad \varepsilon\to 0,
$$
which completes the derivation of the Onsager--Machlup functional for the McKean--Vlasov SDE \eqref{Main-equation}.


\renewcommand{\theequation}{\thesection.\arabic{equation}}
\setcounter{equation}{0}

\section{An illustrative example}\label{sec:4}
We now present a simple concrete example to illustrate our theoretical results.

Consider the scalar McKean--Vlasov SDE
\begin{equation}\label{eq:example2}
dX_t = \bigl( X_t - X_t^3 + \mathbb{E}[X_t] \bigr) dt
+ \sigma \, dB_t, \qquad X_0 = 1,
\end{equation}
where $\sigma>0$ is a constant. 
The drift coefficient can be written as $b(x, \mu) = x - x^3 + \int_{\mathbb{R}} y \, \mu(dy)$. In the absence of the interaction term (i.e., setting the expectation to zero), the deterministic system $\dot{x} = x - x^3$ has three equilibrium points: $x = -1$, $x = 0$, and $x = 1$, among which $x = 0$ is unstable while $x = \pm 1$ are stable. These two stable equilibria correspond to the metastable states of the stochastic system \eqref{eq:example2}.

A direct verification shows that the drift coefficient satisfies  Assumption~\ref{assump:main}; moreover, the
diffusion coefficient is constant, with $\Sigma=\sigma^2$. Therefore, by
Theorem~\ref{thm:om} together with Remark~\ref{rem:generality}, the
Onsager--Machlup functional associated with \eqref{eq:example2} is well
defined. In the present one-dimensional setting, the general OM functional
reduces to
\[
I(\phi,\dot\phi)
=
\frac12\int_0^T
\left[
\frac{\bigl(\dot\phi_t-b(\phi_t,\delta_{\phi_t})\bigr)^2}{\sigma^2}
+
\operatorname{div} b(\phi_t,\delta_{\phi_t})
\right]dt.
\]
For a path $\phi$ satisfying the boundary conditions $\phi_0=1$ and
$\phi_T=-1$, we have
\[
b(\phi_t,\delta_{\phi_t})
=
\phi_t-\phi_t^3+\int_{\mathbb R}y\,\delta_{\phi_t}(dy)
=
2\phi_t-\phi_t^3.
\]
Consequently, the action functional becomes
\begin{equation}\label{eq:om-example2}
I(\phi,\dot{\phi})
=
\frac{1}{2} \int_0^T
\left(
\frac{ \bigl( \dot{\phi}_t - (2\phi_t - \phi_t^3) \bigr)^2 }{\sigma^2}
+ 1 - 3\phi_t^2
\right) dt.
\end{equation}

Next, applying Theorem~\ref{thm:om-EL} together with the one-dimensional
constant-diffusion formula \eqref{eq:EL-1d-constant}, and using
\[
D_\mu b(x,\mu)(z)=1,
\qquad
\frac{\partial b}{\partial x}(x,\mu)=1-3x^2,
\]
we obtain
\begin{equation}\label{eq:el-example2}
\ddot{\phi}_t
=
(2-3\phi_t^2)(2\phi_t-\phi_t^3)
-
3\sigma^2\phi_t,
\end{equation}
with $\phi_0=1$ and $\phi_T=-1$. This coincides with the Euler--Lagrange
equation obtained by directly applying the standard first variation to the
reduced functional \eqref{eq:om-example2}.

The most probable transition path (MPTP) $\phi^*$ is then obtained by
minimizing the OM action functional \eqref{eq:om-example2}, equivalently by
solving the two-point boundary value problem \eqref{eq:el-example2}.
Numerical solutions for $T=1$ with $\sigma=1$ and $\sigma=1.5$ are shown in
Figure~\ref{fig:path}. In both cases, the simulated sample paths concentrate
around the MPTP, shown as the bold red curve. This confirms that the
minimizer of the OM functional captures the most probable transition
behavior of the system. Moreover, as the noise intensity increases from
$\sigma=1$ to $\sigma=1.5$, transitions between the two metastable states
become more likely, which is consistent with physical intuition.

\begin{figure}[t]
 \centering
 \subfigure[]{
 \label{fig1a}
 \includegraphics[width=0.45\textwidth]{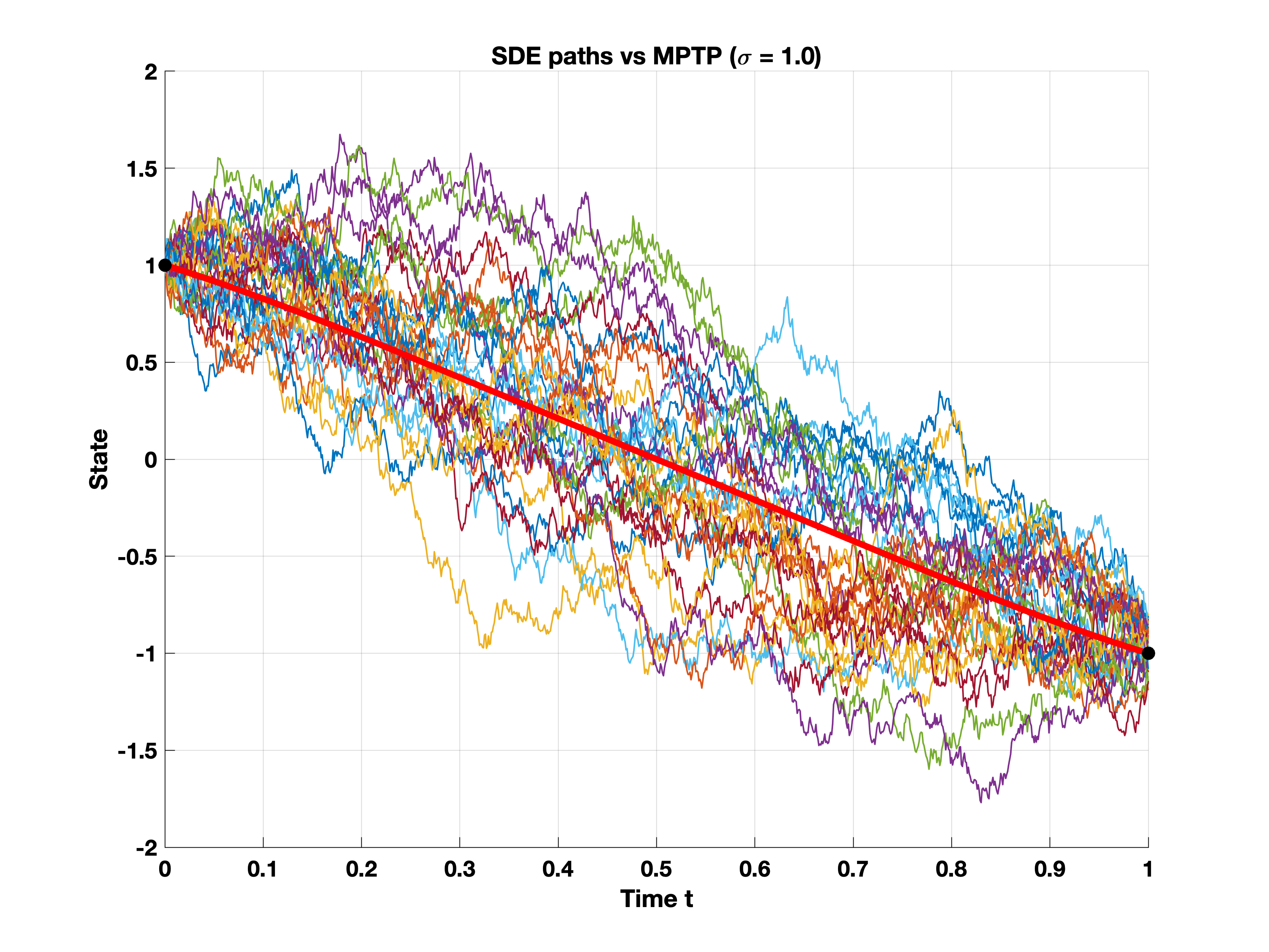}
 }
 \subfigure[]{
 \label{fig1b}
 \includegraphics[width=0.45\textwidth]{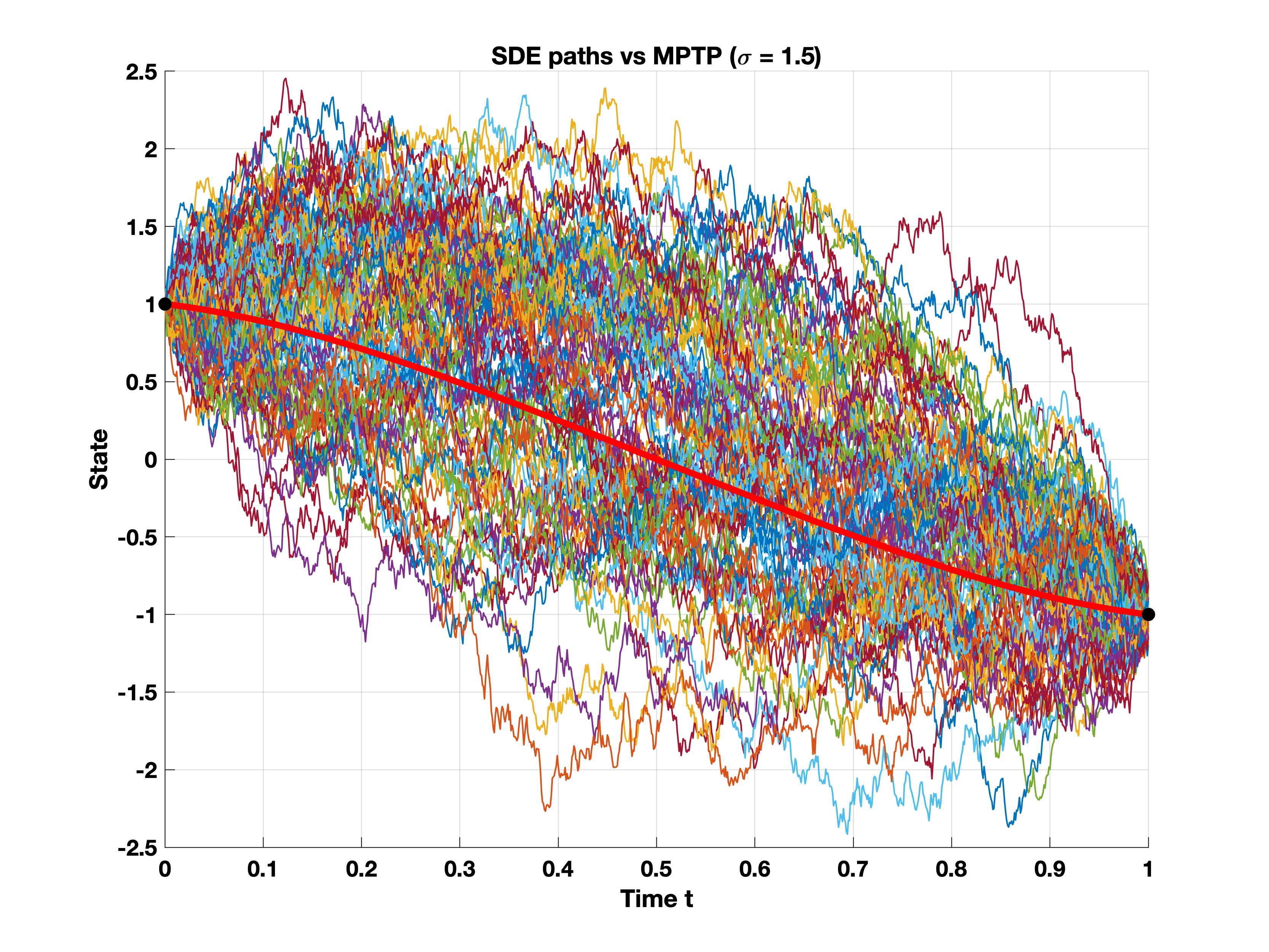}
 }
  \vspace{-0.5cm}  \caption{ (Color online) (a) The pattern of different sample paths and the MPTP (bold red line) when $\sigma=1$. (b) The pattern of different sample paths and the MPTP (bold red line) when $\sigma=1.5$.}
 \label{fig:path}
 \end{figure}

\section{Conclusion and Discussion}\label{sec:5}

In this work, we derived the Onsager--Machlup functional for a class of McKean--Vlasov SDEs under locally Lipschitz conditions in the state variable, allowing for a possibly superlinearly growing drift. The central idea is to approximate the original distribution-dependent system by an Euler-type sequence of classical, distribution-free SDEs, and then to pass to the limit by exploiting the classical Onsager--Machlup theory for each approximating system. This provides a novel and powerful strategy for studying distribution-dependent stochastic dynamics through approximation by classical diffusions. By viewing the Onsager--Machlup functional as a Lagrangian, we further obtained a variational characterization of the most probable transition path (MPTP) via the corresponding Euler--Lagrange equation with fixed endpoint conditions. The illustrative example of a particle in a double-well potential confirms that the resulting minimizer indeed captures the expected transition behavior.

The present framework is sufficiently general to suggest several further developments. In particular, with suitable modifications, the method can be extended to non-Gaussian McKean--Vlasov SDEs driven by Lévy noise, provided that $\int_{|\xi|<1}\xi\,\nu(d\xi)<\infty$ and that the convergence of the associated interpolated Euler-type approximation is available. It would also be of interest to investigate sharper regularity assumptions, higher-dimensional examples, and numerical schemes for computing the associated MPTPs in more general distribution-dependent settings.

\section*{Acknowledgements}
The research of YC was partially supported by 
Shaanxi Fundamental Science Research Project for Mathematics and Physics                                               (Grant No.25JSQ042),
by the Fundamental Research Funds for the Central Universities xzy012025071, by the NSFC grant 12101484. The research of PW was partially supported by the Natural Science Foundation of Jiangsu Province (Grant No.BK20251330) and the Jiangsu Provincial Scientific Research Center of Applied Mathematics (Grant No.BK20233002).

\section*{Data availability statement}
No new data were created or analysed in this study.

\section*{Conflict of interest}
The authors declare that they have no competing interests.

\section*{ORCID iD}
Ying Chao \orcidlink{0000-0002-0132-2680} 0000-0002-0132-2680\par
Pingyuan Wei \orcidlink{0000-0001-8446-538X} 0000-0001-8446-538X




\bibliographystyle{alpha}
\bibliography{YC_Refs}
\end{document}